\documentclass[smallextended]{svjour3}       
\smartqed
\usepackage{amsmath,mathrsfs,multirow,amssymb}
\usepackage{enumerate}
\usepackage{color,url}
\usepackage{indentfirst}
\usepackage{graphicx}
\usepackage{fancyhdr}
\usepackage[ruled,noline,linesnumbered]{algorithm2e}
\usepackage{subfigure}
\usepackage{rotating}
\usepackage{tabularx}
\usepackage{epstopdf}
\usepackage{caption}

\begin{document}

\title{On the acceleration of the Barzilai-Borwein method}



\author{Yakui Huang\and
        Yu-Hong Dai\and
        Xin-Wei Liu\and
        Hongchao Zhang
}


\institute{
Yakui Huang \at
             Institute of Mathematics, Hebei University of Technology, Tianjin 300401, China\\
              \email{huangyakui2006@gmail.com}
              \and
Yu-Hong Dai \at
              LSEC, ICMSEC, Academy of Mathematics and Systems Science, Chinese Academy of Sciences, 100190, Beijing, China\\
              \email{dyh@lsec.cc.ac.cn}           
           \and
              Xin-Wei Liu \at
              Institute of Mathematics, Hebei University of Technology, Tianjin 300401, China\\
              \email{mathlxw@hebut.edu.cn}
              \and
              Hongchao Zhang \at
              Department of Mathematics, Louisiana State University, Baton Rouge, LA 70803-4918, USA\\
               \email{hozhang@math.lsu.edu}
}
\date{Received: date / Accepted: date}

\maketitle

\begin{abstract}
The Barzilai-Borwein (BB) gradient method is efficient for solving large-scale
unconstrained problems to the modest accuracy and has a great advantage of being easily
 extended to solve a wide class of constrained optimization problems.
In this paper, we propose a new stepsize to accelerate the BB method
by requiring finite termination for minimizing two-dimensional strongly convex quadratic function.
Combing with this new stepsize, we develop gradient methods which adaptively take
the nonmonotone BB stepsizes and certain monotone stepsizes
for minimizing general strongly convex quadratic function.
Furthermore, by incorporating nonmonotone line searches and gradient projection techniques,
we extend these new gradient methods to solve general smooth unconstrained
and bound constrained optimization.
Extensive numerical experiments show that our strategies of properly inserting monotone
gradient steps into the nonmonotone BB method could significantly improve its performance
and the new resulted methods can outperform the most successful gradient decent
methods developed in the recent literature.
 \subclass{90C20 \and 90C25 \and 90C30}
\end{abstract}

\section{Introduction}
\label{intro}
Gradient descent methods have been widely used for solving smooth unconstrained nonlinear optimization
\begin{equation}\label{eqpro}
  \min_{x\in\mathbb{R}^n}~f(x)
\end{equation}
by generating a sequence of iterates
\begin{equation}\label{eqitr}
  x_{k+1}=x_k-\alpha_kg_k,
\end{equation}
where $f: \mathbb{R}^n \to \mathbb{R}$ is continuously differentiable,
$g_k=\nabla f(x_k)$ and $\alpha_k>0$ is the stepsize along the negative gradient.
Different gradient descent methods would have different rules for determining the stepsize $\alpha_k$.
The classic steepest descent (SD) method proposed by Cauchy \cite{cauchy1847methode}
determines its stepsize by the so-called exact line search
\begin{equation}\label{sd}
  \alpha_k^{SD}=\arg\min_{\alpha \in \mathbb{R}}~f(x_k-\alpha g_k).
\end{equation}
Although the SD method locally has the most function value reduction along the negative gradient direction,
it often performs poorly in practice. Theoretically, when $f$ is a strongly convex quadratic function,
i.e., \begin{equation}\label{qudpro}
  f(x)=\frac{1}{2}x ^T Ax-b ^T x,
\end{equation}
where $b\in\mathbb{R}^n$ and $A\in\mathbb{R}^{n\times n}$ is symmetric and positive definite,
SD method converges $Q$-linearly \cite{akaike1959successive} and will asymptotically perform
zigzag between two orthogonal directions \cite{forsythe1968asymptotic,huang2019asymptotic}.

In 1988, Barzilai and Borwein \cite{barzilai1988two} proposed the following two novel stepsizes that
significantly improve the performance of gradient descent methods:
\begin{equation}\label{sBB}
  \alpha_k^{BB1}=\frac{s_{k-1}^Ts_{k-1}}{s_{k-1}^Ty_{k-1}} \qquad \mbox{and} \qquad
  \alpha_k^{BB2}=\frac{s_{k-1}^Ty_{k-1}}{y_{k-1}^Ty_{k-1}},
\end{equation}
where $s_{k-1}=x_k-x_{k-1}$ and $y_{k-1}=g_k-g_{k-1}$.
Clearly, when $s_{k-1}^Ty_{k-1}>0$, one has $\alpha_{k}^{BB1}\geq\alpha_{k}^{BB2}$.
Hence,  $\alpha_{k}^{BB1}$ is often called the \emph{long} BB stepsize
while $\alpha_{k}^{BB2}$ is called the \emph{short} BB stepsize.
When the objective function is quadratic \eqref{qudpro},
the BB stepsize $\alpha_{k}^{BB1}$ will be exactly the steepest descent stepsize, but with
one step retard,  while $\alpha_{k}^{BB2}$ will be just the stepsize of minimal gradient (MG) method \cite{dai2003altermin},
that is
\[
  \alpha_k^{BB1}=\frac{g_{k-1}^Tg_{k-1}}{g_{k-1}^TAg_{k-1}}=\alpha_{k-1}^{SD}
  \quad \mbox{and} \quad \alpha_k^{BB2}=\frac{g_{k-1}^TAg_{k-1}}{g_{k-1}^TA^2g_{k-1}}=\alpha_{k-1}^{MG}.
\]
It is proved that the Barzilai-Borwein (BB) method converges $R$-superlinearly for minimizing two-dimensional strongly convex quadratic function \cite{barzilai1988two}
and $R$-linearly for the general $n$-dimensional case \cite{dai2002r}.
Although the BB method does not decrease the objective function value
monotonically, extensive numerical experiments show that it performs much better than the SD method \cite{fletcher2005barzilai,raydan1997barzilai,yuan2008step}.
And it is commonly accepted that when a not high accuracy is required,
 BB-type methods could be even competitive with nonlinear conjugate gradient (CG) methods for solving smooth unconstrained optimization  \cite{fletcher2005barzilai,raydan1997barzilai}.
Furthermore, by combing with gradient projection techniques, the BB-type methods have
a great advantage of easy extension to solve a wide class of
constrained optimization, for example the bound or simplex constrained optimization \cite{dai2005projected}.
Hence, BB-type methods enjoy many important applications, such as image restoration \cite{wang2007projected}, signal processing \cite{liu2011coordinated}, eigenvalue problems \cite{jiang2013feasible}, nonnegative matrix factorization \cite{huang2015quadratic}, sparse reconstruction \cite{wright2009sparse}, machine learning \cite{tan2016barzilai}, etc.

Recently, Yuan \cite{yuan2006new,yuan2008step} propose a
gradient descent method which combines a new stepsize
\begin{equation}\label{syv}
    \alpha_k^{Y}=\frac{2}{\frac{1}{\alpha_{k-1}^{SD}}+\frac{1}{\alpha_{k}^{SD}}+
  \sqrt{\left(\frac{1}{\alpha_{k-1}^{SD}}-\frac{1}{\alpha_{k}^{SD}}\right)^2+
  \frac{4\|g_k\|^2}{(\alpha_{k-1}^{SD}\|g_{k-1}\|)^2}}},
\end{equation}
in the SD method so that the new method enjoys finite termination  for minimizing
a two-dimensional strongly convex quadratic function.
Based on this new stepsize $\alpha_k^Y$,
Dai and Yuan \cite{dai2005analysis} further develop the DY method,
which alternately employs $\alpha_k^{SD}$ and $\alpha_k^{Y}$ stepsizes as follows
\begin{equation}\label{sdy}
  \alpha_k^{DY}=\left\{
                  \begin{array}{ll}
                    \alpha_k^{SD}, & \hbox{if mod($k$,4)$<2$;} \\
                    \alpha_k^{Y}, & \hbox{otherwise.}
                  \end{array}
                \right.
\end{equation}
It is easy to see that $\alpha_k^{Y}\leq\alpha_k^{SD}$. Hence, DY method \eqref{sdy} is a monotone method.
 Moreover, it is shown that DY method \eqref{sdy} performs better than the nonmonotone BB method \cite{dai2005analysis}.

The property of nonmonotonically reducing objective function values is an intrinsic feature that
causes the efficiency of BB method.  However, it is also pointed out by Fletcher \cite{fletcher2012limited}
 that retaining monotonicity is important for a gradient method, especially for minimizing general
 objective functions.
 On the other hand, although the monotone DY method performs well, using $\alpha_k^{SD}$ and $\alpha_k^{Y}$
  in a nonmonotone fashion may yield better performance, see \cite{de2014efficient} for example.
Moreover, it is usually difficult to compute the exact monotone stepsize $\alpha_k^{SD}$
in general optimization.
Hence, in this paper,  motivated by the great success of the BB method and the previous considerations,
we want to further improve and accelerate the \emph{nonmonotone} BB method by incorporating
some \emph{monotone} steps.
For a more general and uniform analysis, we first consider to accelerate the class of
 gradient descent methods \eqref{eqitr} for quadratic optimization  \eqref{qudpro}
 using the following stepsize
\begin{equation}\label{glstep1}
  \alpha_k(\Psi(A))=\frac{g_{k-1}^T\Psi(A) g_{k-1}}{g_{k-1}^T\Psi(A)Ag_{k-1}},
\end{equation}
where $\Psi(\cdot)$ is a real analytic function on $[\lambda_1,\lambda_n]$ that can be expressed by a Laurent series
\begin{equation*}
  \Psi(z)=\sum_{k=-\infty}^\infty c_kz^k,~~c_k\in\mathbb{R},
\end{equation*}
such that $0<\sum_{k=-\infty}^\infty c_kz^k<+\infty$ for all $z\in[\lambda_1,\lambda_n]$. Here, $\lambda_1$ and $\lambda_n$ are the smallest and largest eigenvalues of $A$, respectively. Clearly, the method \eqref{glstep1} is generally nonmonotone and the two BB stepsizes $\alpha_k^{BB1}$ and $\alpha_k^{BB2}$ can be obtained by setting $\Psi(A)=I$ and $\Psi(A)=A$ in \eqref{glstep1}, respectively.

More particularly, we will derive a new stepsize, say $\tilde{\alpha}_k(\Psi(A))$,
which together with the stepsize $\alpha_k(\Psi(A))$ in \eqref{glstep1} can
minimize the two-dimensional convex quadratic function in no more than $5$ iterations.
To the best of our knowledge, this is the first  nonmonotone gradient method with finite termination property.
We will see that $\tilde{\alpha}_k(I)\leq\alpha_k^{SD}$ and $\tilde{\alpha}_k(A)\leq\alpha_k^{MG}$.
Hence, this finite termination property is essentially obtained by inserting monotone stepsizes into
 the generally nonmonotone stepsizes \eqref{glstep1}.
In fact, we show that this finite termination property can be maintained even the algorithm
uses different function $\Psi$'s during its iteration.  Based on this observation,
 to achieve good numerical performance, we propose an adaptive nonmonotone gradient method
(ANGM), which  adaptively takes some nonmonotone steps involving the long and short BB stepsizes \eqref{sBB},
and some monotone steps using $\tilde{\alpha}_k(A)$.
Moreover, to efficiently minimize more general nonlinear objective function,
we propose two variants of ANGM, called ANGR1 and ANGR2, using certain retard stepsize.
By combing nonmonotone line search and gradient projection techniques,
these two variants of gradient methods are further extended to solve bound constrained optimization.
Our numerical experiments show that the new proposed methods significantly accelerate
the BB method and perform much better on minimizing quadratic function \eqref{qudpro}
than the most successful gradient decent methods developed in the recent literature, such as
DY method \cite{dai2005analysis}, ABBmin2 method \cite{frassoldati2008new} and SDC method
\cite{de2014efficient}. In addition, we also compare
ANGR1 and ANGR2 with the spectral projected gradient (SPG) method
\cite{birgin2000nonmonotone,birgin2014spectral} and the BB method
using Dai-Zhang nonmonotone line search (BB1-DZ) \cite{dai2001adaptive} for solving
the general unconstrained problems from \cite{andrei2008unconstrained}
and the bound constrained problems from the CUTEst collection \cite{gould2015cutest}.
The numerical results highly suggest the potential benefits of our new proposed methods
for solving more general unconstrained and bound constrained optimization.

The paper is organized as follows. In Section \ref{secabb}, we derive the new stepsize
$\tilde{\alpha}_k(\Psi(A))$ by requiring finite termination on minimizing two-dimensional strongly
convex quadratic function.
In Section \ref{secarbbmd}, we first derive the ANGM, ANGR1 and ANGR2 methods for minimizing
general strongly convex quadratic function and then generalize ANGR1 and ANGR2 to solve bound
 constrained optimization.
Our extensive numerical experiments on minimizing strongly convex quadratic function and
solving general unconstrained and bound constrained optimization are presented in Section \ref{secnum}.
We finally draw some conclusion remarks in  Section \ref{secclu}.

\vspace{-2mm}
\section{Derivation of new stepsize}\label{secabb}
In this section, we derive a new monotone stepsize based on the nonmonotone gradient method \eqref{glstep1}
to minimize quadratic function \eqref{qudpro}.
This new stepsize is motivated by requiring finite termination for minimizing two-dimensional
strongly convex quadratic function. Such an idea was originally proposed by Yuan \cite{yuan2006new}
to accelerate SD method. However, new techniques need to be developed for
accelerating the class of nonmonotone gradient descent methods \eqref{glstep1} since the key
orthogonal property of successive two gradients generated by SD method no longer holds for methods \eqref{glstep1}.

Observe that the method \eqref{glstep1} is invariant under translations and rotations
when minimizing quadratics. Hence, for theoretical analysis to minimize \eqref{qudpro},
without loss of generality,
we may simply assume that the matrix $A$ is diagonal, i.e.,
\begin{equation}\label{formA}
  A=\textrm{diag}\{\lambda_1,\ldots,\lambda_n\},
\end{equation}
where $0 < \lambda_1\leq\ldots\leq\lambda_n$.

\subsection{Motivation}\label{submot}
First, let us investigate the behavior of gradient method \eqref{glstep1} with $\Psi(A)=I$ (i.e., the BB1 method). Particularly, we apply it to the non-random quadratic minimization problem
proposed in \cite{de2014efficient},
which has the form \eqref{qudpro} with a diagonal matrix $A$ given by
\begin{equation}\label{pro2}
  A_{jj}=10^{\frac{ncond}{n-1}(n-j)}, ~~~j=1,\ldots,n,
\end{equation}
and $b$ being a null vector. Here, $ncond=\log_{10} \kappa$ and $\kappa>0$ is the condition number of $A$.
We set $n=10$, $\kappa = 10^3$ and use $(10,10,\ldots,10)^T$ as the initial point.
The iteration was stopped once the gradient norm is reduced by a factor of $10^{-6}$.
Denote the $i$-th component of $g_k$ by $g_k^{(i)}$ and
the indices of the components of $g_k$ with two largest magnitudes by $i_1$ and $i_2$, respectively.
Then, the percentage of the magnitudes of the first two largest components to that of the whole gradient
can be computed by
\begin{equation*}
  \Upsilon(g_k)=\frac{|g_k^{(i_1)}|+|g_k^{(i_2)}|}{\sum_{i=1}^n|g_k^{(i)}|}.
\end{equation*}
This $\Upsilon(g_k)$ is plotted in Fig. \ref{bbdom2} (left), where we can see that
$\Upsilon(g_k)\geq0.8$ holds for more than half of the iterations (145 out of 224 total iterations).
Hence, roughly speaking, the searches of the BB1 method are often dominated in
some two-dimensional subspaces.
The history of index $i_1$ against the iteration number is also plotted in Fig. \ref{bbdom2} (right),
where we can see that $|g_k^{(i_1)}|$  corresponds more frequently to the largest eigenvalues $\lambda_{10}$ or $\lambda_{9}$.
Since
\begin{equation}\label{updgki}
  g_{k+1}^{(j)}=(1-\alpha_k\lambda_j)g_k^{(j)},~j=1,\ldots,n.
\end{equation}
and $1/\lambda_{n}\leq\alpha_k\leq1/\lambda_1$, the history of $i_1$ in Fig. \ref{bbdom2} (right)
in fact indicates that, most stepsizes generated by the BB1 method are often much larger than
$1/\lambda_{10}$ or $1/\lambda_{9}$.
As a result, the BB1 method may need many iterations to reduce the corresponding components of the gradients
$g_k^{(9)}$ or $g_k^{(10)}$.
\begin{figure}[thp!b]
  \centering
  \subfigure{\includegraphics[width=0.49\textwidth,height=0.4\textwidth]{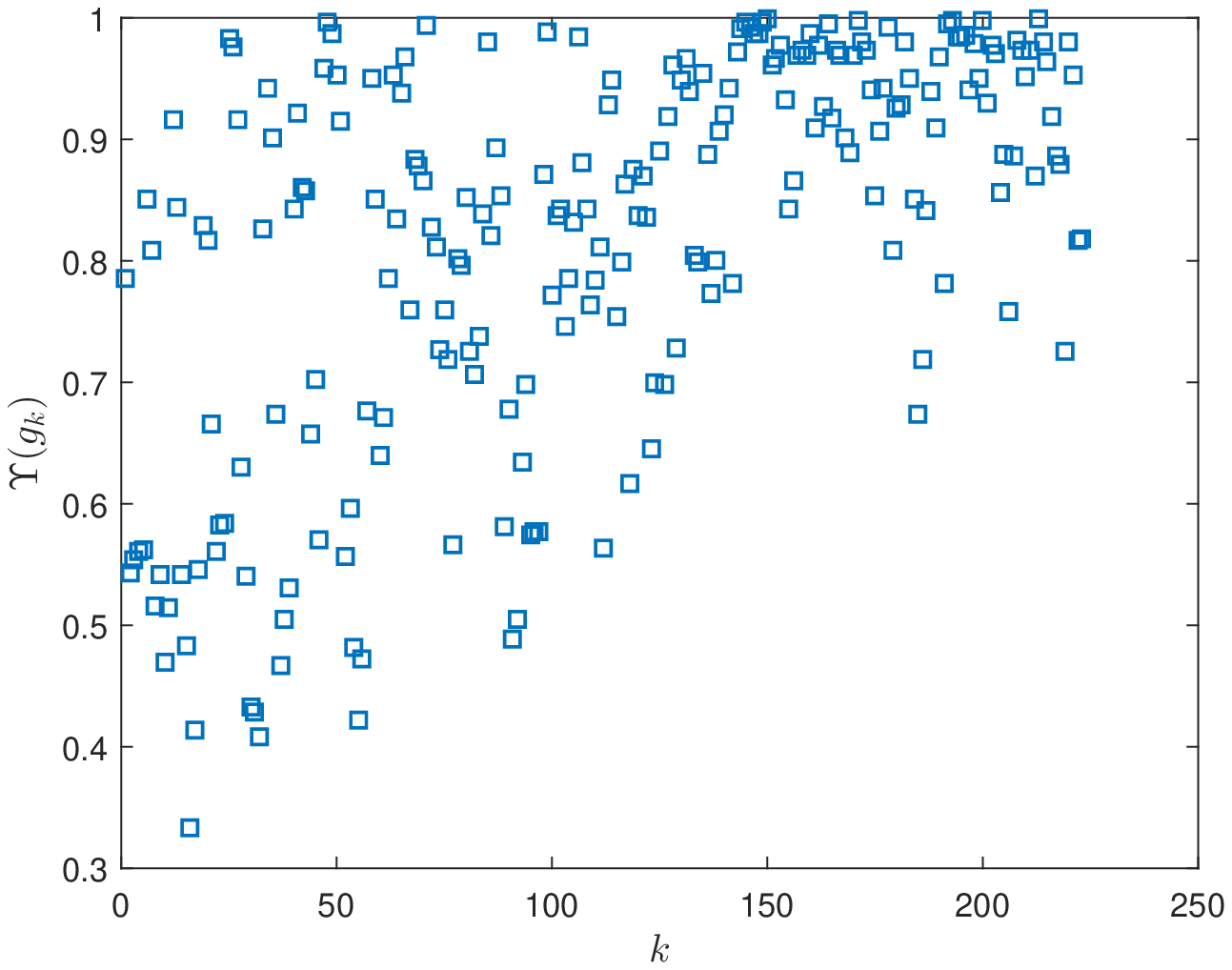}}
  \subfigure{\includegraphics[width=0.49\textwidth,height=0.4\textwidth]{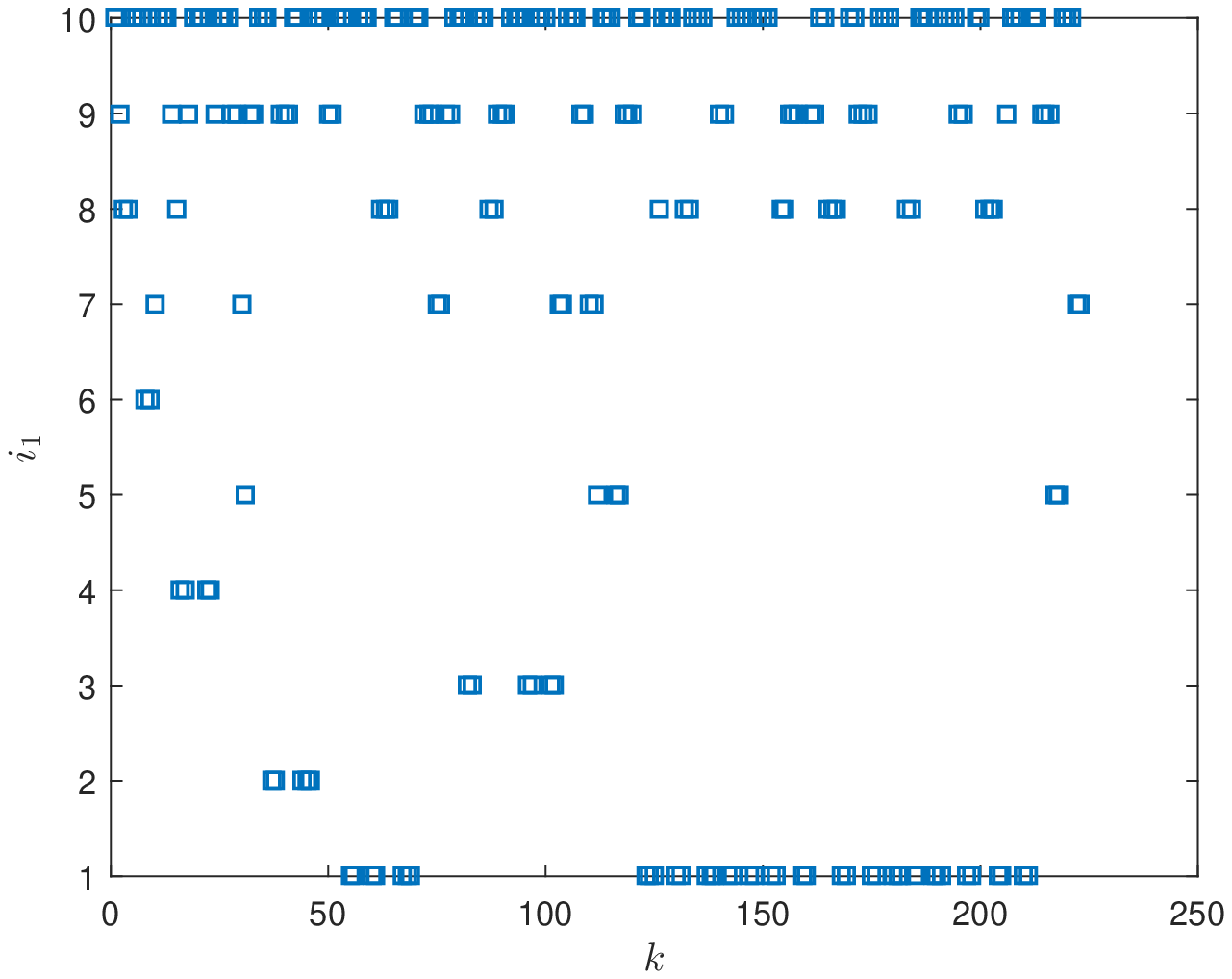}}
  \\
  \caption{Problem \eqref{pro2} with $n=10$: history of $\Upsilon(g_k)$ (left) and the index $i_1$ (right) generated by the BB1 method}\label{bbdom2}
\end{figure}

In \cite{huang2019asymptotic}, we showed that a family of gradient methods including SD and MG will asymptotically reduce
their searches in a two-dimensional subspace and
could be accelerated by exploiting certain orthogonal properties in this two-dimensional subspace.
In a similar spirit, we could also accelerate the convergence of
the class of gradient methods \eqref{glstep1}
in a lower dimensional subspace if certain orthogonal properties hold.

Suppose that, for a given $k>0$, there exists a $q_k$ satisfying
\begin{equation}\label{defqk}
  (I-\alpha_{k-1}A)q_k=g_{k-1}.
\end{equation}
Since this $q_k$ is also invariant under translations and rotations, for later analysis
we may still assume $A$ in \eqref{defqk} is diagonal as in \eqref{formA}.
The following lemma shows a generalized orthogonal property for $q_k$ and $g_{k+1}$,
which is a key property for deriving our new stepsize in the next subsection.

\begin{lemma}[Orthogonal property]\label{orpty}
Suppose that the sequence $\{g_k\}$ is obtained by applying gradient method \eqref{eqitr}
with stepsizes \eqref{glstep1}
to minimize a quadratic function \eqref{qudpro} and $q_k$ satisfies \eqref{defqk}. Then, we have
  \begin{equation}\label{orthg}
  q_k^T\Psi(A)g_{k+1}=0.
\end{equation}
\end{lemma}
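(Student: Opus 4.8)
The plan is to reduce everything to functions of the single symmetric matrix $A$ and then exploit the defining relation \eqref{defqk} for $q_k$ together with the definition of the stepsize \eqref{glstep1}. The starting observation is the standard gradient recursion for the quadratic \eqref{qudpro}: since $g_k=Ax_k-b$ and $x_{k+1}=x_k-\alpha_kg_k$, one has $g_{k+1}=(I-\alpha_kA)g_k$, which is exactly the vector form of \eqref{updgki}. Applying it once more gives $g_{k+1}=(I-\alpha_kA)(I-\alpha_{k-1}A)g_{k-1}$, so $g_{k+1}$, $g_{k-1}$, $q_k$, $\Psi(A)$ and $A$ are all built from the same matrix $A$.

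The heart of the argument is that every matrix appearing here, namely $\Psi(A)$, $I-\alpha_kA$, $I-\alpha_{k-1}A$ and $A$ itself, is a function of the symmetric matrix $A$; hence each is symmetric and any two of them commute. Taking $A$ diagonal as in \eqref{formA} makes this completely transparent, since all these matrices are then diagonal. Starting from $q_k^T\Psi(A)g_{k+1}$, I would substitute $g_{k+1}=(I-\alpha_kA)(I-\alpha_{k-1}A)g_{k-1}$ and then move the factor $I-\alpha_{k-1}A$ across the product: by commutativity it may be placed adjacent to $q_k^T$, and by symmetry it may be transposed onto $q_k$, yielding
\begin{equation*}
  q_k^T\Psi(A)g_{k+1}=\bigl[(I-\alpha_{k-1}A)q_k\bigr]^T\Psi(A)(I-\alpha_kA)g_{k-1}.
\end{equation*}
Now the defining relation \eqref{defqk}, namely $(I-\alpha_{k-1}A)q_k=g_{k-1}$, collapses the bracket to $g_{k-1}$.

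It then remains to evaluate $g_{k-1}^T\Psi(A)(I-\alpha_kA)g_{k-1}=g_{k-1}^T\Psi(A)g_{k-1}-\alpha_k\,g_{k-1}^T\Psi(A)Ag_{k-1}$ and to substitute the stepsize $\alpha_k=\alpha_k(\Psi(A))=\bigl(g_{k-1}^T\Psi(A)g_{k-1}\bigr)/\bigl(g_{k-1}^T\Psi(A)Ag_{k-1}\bigr)$ from \eqref{glstep1}; the two terms cancel identically and the expression vanishes, which is \eqref{orthg}. I expect no genuine difficulty beyond correctly invoking commutativity and symmetry of functions of $A$. The only point requiring care is to perform the transposition step on the $I-\alpha_{k-1}A$ factor rather than inverting it, so that the argument never needs $I-\alpha_{k-1}A$ to be nonsingular and relies only on the existence of $q_k$ assumed in the statement.
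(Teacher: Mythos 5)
Your proposal is correct and follows essentially the same route as the paper's own proof: substitute $g_{k+1}=(I-\alpha_kA)(I-\alpha_{k-1}A)g_{k-1}$, use symmetry and commutativity of functions of $A$ to transpose the factor $I-\alpha_{k-1}A$ onto $q_k$ so that \eqref{defqk} collapses it to $g_{k-1}$, and then cancel via the definition \eqref{glstep1} of $\alpha_k$. Your explicit remarks on commutativity and on avoiding any invertibility assumption for $I-\alpha_{k-1}A$ merely spell out steps the paper performs silently.
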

\begin{proof}
By \eqref{eqitr}, \eqref{glstep1} and \eqref{defqk} we get
\begin{align*}
  q_k^T\Psi(A)g_{k+1}
 &=q_k^T\Psi(A)(I-\alpha_kA)(I-\alpha_{k-1}A)g_{k-1}
\\
&=g_{k-1}^T\Psi(A)(I-\alpha_kA)g_{k-1}
\\
&=g_{k-1}^T\Psi(A)g_{k-1}-\alpha_kg_{k-1}^T\Psi(A)Ag_{k-1}=0.
\end{align*}
This completes the proof.
\qed
\end{proof}

\subsection{A new stepsize}\label{sectwod}

In this subsection, we derive a new stepsize based on the iterations of gradient method \eqref{glstep1}.
We show that combining the new stepsize with gradient method \eqref{glstep1}, we can
achieve finite termination for minimizing two-dimensional strongly convex quadratic functions.

By Lemma \ref{orpty}, we have that $g_{k}^T\Psi(A)q_{k-1}=0$ for $k>0$.
Now, suppose both $\Psi^r(A)q_{k-1}$ and $\Psi^{1-r}(A)g_{k}$ are nonzero vectors, where $r \in \mathbb{R}$.
Let us consider to minimize the function $f$ in a two-dimensional subspace spanned by
 $\frac{\Psi^r(A)q_{k-1}}{\|\Psi^r(A)q_{k-1}\|}$ and $\frac{\Psi^{1-r}(A)g_{k}}{\|\Psi^{1-r}(A)g_{k}\|}$,
 and let
\begin{align}\label{ftu1}
\varphi(t,l) &:=
 f\left(x_{k}+t\frac{\Psi^r(A)q_{k-1}}{\|\Psi^r(A)q_{k-1}\|}
  +l\frac{\Psi^{1-r}(A)g_{k}}{\|\Psi^{1-r}(A)g_{k}\|}\right) \nonumber\\
& =
f(x_{k}) +\vartheta_k^T
              \begin{pmatrix}
                t \\
                l \\
              \end{pmatrix}
            +  \frac{1}{2}
              \begin{pmatrix}
                t \\
                l \\
              \end{pmatrix} ^T
H_k
              \begin{pmatrix}
                t \\
                l \\
              \end{pmatrix},
\end{align}
where
\begin{equation}\label{eqgrad}
  \vartheta_k= B_k g_{k} =  \begin{pmatrix}
                \frac{g_{k}^T\Psi^r(A)q_{k-1}}{\|\Psi^r(A)q_{k-1}\|}\\
                \frac{g_{k}^T\Psi^{1-r}(A)g_{k}}{\|\Psi^{1-r}(A)g_{k}\|} \\
              \end{pmatrix}
\mbox{ with  }    B_k=\begin{pmatrix}
      \frac{\Psi^r(A)q_{k-1}}{\|\Psi^r(A)q_{k-1}\|},
      \frac{\Psi^{1-r}(A)g_{k}}{\|\Psi^{1-r}(A)g_{k}\|} \\
    \end{pmatrix} ^T
\end{equation}
and
\begin{equation} \label{eqhes}
  H_k=  B_k A B_k^T  =  \begin{pmatrix}
      \frac{q_{k-1}^T\Psi^{2r}(A)Aq_{k-1}}{\|\Psi^r(A)q_{k-1}\|^2} & \frac{q_{k-1}^T\Psi(A) Ag_{k}}{\|\Psi^r(A)q_{k-1}\|\|\Psi^{1-r}(A)g_{k}\|} \\
      \frac{q_{k-1}^T\Psi(A)Ag_{k}}{\|\Psi^r(A)q_{k-1}\|\|\Psi^{1-r}(A)g_{k}\|}  & \frac{g_{k}^T\Psi^{2(1-r)}(A)Ag_{k}}{\|\Psi^{1-r}(A)g_{k}\|^2}\\
    \end{pmatrix}.
\end{equation}
Denote the components of $H_k$ by $H_k^{(ij)}$, $i,j=1,2$ and notice that $B_k B_k^T = I$
by $g_{k}^T\Psi(A)q_{k-1}=0$.
Then, we have the following finite termination theorem.

\begin{theorem}[Finite termination]\label{thftm}
Suppose that a gradient method \eqref{eqitr} is applied to minimize a two-dimensional quadratic function \eqref{qudpro} with $\alpha_k$ given by \eqref{glstep1}
for all $k \neq k_0$ and uses the stepsize
  \begin{align}\label{new2}
  \tilde{\alpha}_{k_0}
  &= \frac{2}{(H_{k_0}^{(11)}+H_{k_0}^{(22)})+\sqrt{(H_{k_0}^{(11)}-H_{k_0}^{(22)})^2+4(H_{k_0}^{(12)})^2}}
\end{align}
 at the $k_0$-th iteration where $k_0 \geq2$.  Then, the method will find the minimizer in at most $k_0+3$ iterations.
\end{theorem}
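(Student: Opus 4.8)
The plan is to recognize that the stepsize \eqref{new2} is simply the reciprocal of the largest eigenvalue of the $2\times2$ restricted Hessian $H_{k_0}$, that in the two-dimensional setting this eigenvalue is exactly $\lambda_n$, and that one step with this stepsize annihilates the gradient component along the dominant eigenvector, after which a single ordinary step \eqref{glstep1} finishes the job. First I would read the eigenvalue structure directly off \eqref{new2}: for any symmetric $2\times2$ matrix the eigenvalues are $\tfrac12\big[(H_{k_0}^{(11)}+H_{k_0}^{(22)})\pm\sqrt{(H_{k_0}^{(11)}-H_{k_0}^{(22)})^2+4(H_{k_0}^{(12)})^2}\,\big]$, so the denominator of \eqref{new2} is precisely $2\lambda_{\max}(H_{k_0})$, whence $\tilde{\alpha}_{k_0}=1/\lambda_{\max}(H_{k_0})$.

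Next I would pin down $\lambda_{\max}(H_{k_0})$. Since the problem is two-dimensional, the matrix $B_{k_0}$ of \eqref{eqgrad} is square, and the orthogonal property of Lemma \ref{orpty} gives $B_{k_0}B_{k_0}^T=I$, so $B_{k_0}$ is orthogonal. Consequently $H_{k_0}=B_{k_0}AB_{k_0}^T$ in \eqref{eqhes} is orthogonally similar to $A$ and shares its spectrum $\{\lambda_1,\lambda_n\}$. Therefore $\lambda_{\max}(H_{k_0})=\lambda_n$ and $\tilde{\alpha}_{k_0}=1/\lambda_n$. With $A$ diagonal as in \eqref{formA}, I would then feed this into the componentwise update \eqref{updgki}: the choice $\alpha_{k_0}=1/\lambda_n$ yields $g_{k_0+1}^{(n)}=(1-\lambda_n/\lambda_n)g_{k_0}^{(n)}=0$, so $g_{k_0+1}$ is collinear with the eigenvector $e_1$ of $\lambda_1$. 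For a gradient lying along a single eigenvector, the stepsize \eqref{glstep1} collapses to exact line search, $\alpha_{k_0+1}=\frac{\Psi(\lambda_1)\|g_{k_0+1}\|^2}{\Psi(\lambda_1)\lambda_1\|g_{k_0+1}\|^2}=1/\lambda_1$, which zeroes the remaining component and produces $g_{k_0+2}=0$. Thus $x_{k_0+2}$ is the minimizer, comfortably within the claimed $k_0+3$ iterations.

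The main obstacle is the clean identification $\tilde{\alpha}_{k_0}=1/\lambda_n$, which rests on $B_{k_0}$ being a genuine orthogonal matrix and hence on the nondegeneracy of the two spanning vectors. I would close this gap using the standing hypothesis that $\Psi>0$ on $[\lambda_1,\lambda_n]$, which makes every $\Psi^s(A)$ positive definite: then $\Psi^{1-r}(A)g_{k_0}=0$ forces $g_{k_0}=0$, and $\Psi^{r}(A)q_{k_0-1}=0$ forces $q_{k_0-1}=0$, hence $g_{k_0-2}=0$ via \eqref{defqk}; in either case the method has already terminated before iteration $k_0$. Similarly, the existence of $q_{k_0-1}$ requires $I-\alpha_{k_0-2}A$ to be nonsingular, which can fail only if an earlier stepsize equalled $1/\lambda_1$ or $1/\lambda_n$; such a step would itself have zeroed a gradient component and triggered finite termination no later than iteration $k_0$. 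Outside these already-terminated situations the argument above applies verbatim and gives termination by iteration $k_0+2\le k_0+3$, which is why the index restriction $k_0\ge2$ (needed only so that $q_{k_0-1}$ is defined through \eqref{defqk}) suffices.
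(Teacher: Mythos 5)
Your identification of the new stepsize is correct and is in fact a cleaner route than the paper's own: since $B_{k_0}B_{k_0}^T=I$ (from Lemma \ref{orpty}) makes the square matrix $B_{k_0}$ orthogonal in dimension two, $H_{k_0}=B_{k_0}AB_{k_0}^T$ shares the spectrum $\{\lambda_1,\lambda_2\}$ of $A$, and the denominator of \eqref{new2} is $2\lambda_{\max}(H_{k_0})$, so $\tilde{\alpha}_{k_0}=1/\lambda_2$ and $g_{k_0+1}^{(2)}=(1-\tilde{\alpha}_{k_0}\lambda_2)g_{k_0}^{(2)}=0$, i.e., $g_{k_0+1}$ is an eigenvector of $A$. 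The paper reaches this same intermediate conclusion ($g_{k_0+1}$ parallel to $A^{-1}g_{k_0+1}$) by a longer algebraic manipulation --- multiplying the quadratic equation \eqref{eqalp2} satisfied by $\tilde{\alpha}_{k_0}$ by an auxiliary scalar $\Theta$ to derive the parallelism \eqref{parallel-g2} --- and only records the root interpretation ($1/\lambda_1$ and $1/\lambda_2$ as roots of \eqref{eqalp2}) afterwards as a remark. Your nondegeneracy discussion (positive definiteness of $\Psi^s(A)$ forcing the spanning vectors to be nonzero, and the singularity of $I-\alpha_{k_0-2}A$ implying earlier termination) is also sound and goes beyond what the paper makes explicit.

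However, the final step of your argument misreads \eqref{glstep1} and fails as written. The stepsize $\alpha_k(\Psi(A))=g_{k-1}^T\Psi(A)g_{k-1}/g_{k-1}^T\Psi(A)Ag_{k-1}$ is \emph{retarded}: it is computed from the previous gradient $g_{k-1}$, not from $g_k$ --- this one-step lag is the defining BB feature ($\alpha_k^{BB1}=\alpha_{k-1}^{SD}$). Consequently $\alpha_{k_0+1}$ is built from $g_{k_0}$, which is generally not an eigenvector, so in general $\alpha_{k_0+1}\neq 1/\lambda_1$ and your claimed termination at iteration $k_0+2$ does not hold: for instance with $\Psi(A)=I$, $\alpha_{k_0+1}=\|g_{k_0}\|^2/g_{k_0}^TAg_{k_0}$ lies strictly between $1/\lambda_2$ and $1/\lambda_1$ whenever both components of $g_{k_0}$ are nonzero. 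The repair is immediate and is exactly how the paper concludes: $g_{k_0+2}=(1-\alpha_{k_0+1}\lambda_1)g_{k_0+1}$ remains on the eigenvector line, so the \emph{next} stepsize $\alpha_{k_0+2}$, computed from the eigenvector $g_{k_0+1}$, equals $1/\lambda_1$, whence $g_{k_0+3}=(1-\alpha_{k_0+2}\lambda_1)g_{k_0+2}=0$. This one-iteration delay caused by the retard is precisely why the theorem's bound is $k_0+3$ rather than the $k_0+2$ you obtained.
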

\begin{proof}
Let us suppose  $x_k$ is not a minimizer for all $k=1, \ldots, k_0+2$. We then show $k_0+3$ must be the minimizer, i.e., $g_{k_0+3} = 0$. For notation convenience, in the following proof of this
theorem, let's simply use $k$ to denote $k_0$.
First, we show that using stepsize \eqref{new2} at the $k$-th iteration implies
\begin{equation} \label{parallel-g2}
g_{k+1}\quad \mbox{is parallel to} \quad - B_k^T  H_k^{-1} \vartheta_k + \tilde{\alpha}_{k} g_{k},
\end{equation}
where $\vartheta_k, B_k$ and $H_k$ is given by \eqref{eqgrad} and \eqref{eqhes}.
In fact, $\tilde{\alpha}_{k}$ given by \eqref{new2} satisfies the following quadratic equation
\begin{equation}\label{eqalp2}
  \tilde{\alpha}_{k}^2\Delta -\tilde{\alpha}_{k}(H_k^{(11)}+H_k^{(22)})+1=0,
\end{equation}
where $\Delta=\textrm{det}(H_k) = \textrm{det}(A)  >0$.
Let
\begin{equation*}
  \Theta=(H_k^{(12)}\vartheta_k^{(1)}+H_k^{(22)}\vartheta_k^{(2)})\vartheta_k^{(1)}
  -(H_k^{(11)}\vartheta_k^{(1)}+H_k^{(12)}\vartheta_k^{(2)})\vartheta_k^{(2)},
\end{equation*}
where $\vartheta_k^{(i)}$ are components of $\vartheta_k$, $i=1,2$.
Then, multiplying $\Theta$ to \eqref{eqalp2}, we have
\begin{equation}\label{eqalp1}
  \tilde{\alpha}_{k}^2\Delta \Theta- \tilde{\alpha}_{k}(H_k^{(11)}+H_k^{(22)})\Theta+\Theta=0,
\end{equation}
which is exactly
\begin{align}\label{parallel-explicit}
  &(H_k^{(22)}\vartheta_k^{(1)}-H_k^{(12)}\vartheta_k^{(2)}-\tilde{\alpha}_k\Delta \vartheta_k^{(1)})
  [\vartheta_k^{(2)}- \tilde{\alpha}_k(H_k^{(12)}\vartheta_k^{(1)}+H_k^{(22)}\vartheta_k^{(2)})] \nonumber\\
  =&(H_k^{(11)}\vartheta_k^{(2)}-H_k^{(12)}\vartheta_k^{(1)}-\tilde{\alpha}_k\Delta \vartheta_k^{(2)})
  [\vartheta_k^{(1)}-\tilde{\alpha}_k(H_k^{(11)}\vartheta_k^{(1)}+H_k^{(12)}\vartheta_k^{(2)})].
\end{align}
The above identity \eqref{parallel-explicit} implies the vector
\begin{equation*}
              \begin{pmatrix}
                \vartheta_k^{(1)}- \tilde{\alpha}_k(H_k^{(11)}\vartheta_k^{(1)}+H_k^{(12)}\vartheta_k^{(2)}) \\
                \vartheta_k^{(2)}- \tilde{\alpha}_k(H_k^{(12)}\vartheta_k^{(1)}+H_k^{(22)}\vartheta_k^{(2)})\\
              \end{pmatrix}
\end{equation*}
is parallel to
\begin{equation*}
              \begin{pmatrix}
           H_k^{(22)}\vartheta_k^{(1)}-H_k^{(12)}\vartheta_k^{(2)}- \tilde{\alpha}_k\Delta \vartheta_k^{(1)} \\
           H_k^{(11)}\vartheta_k^{(2)}-H_k^{(12)}\vartheta_k^{(1)}-\tilde{\alpha}_k\Delta \vartheta_k^{(2)}\\
              \end{pmatrix},
\end{equation*}
which written in a matrix format just means
\begin{equation}\label{g2pal2}
  \vartheta_k+H_k(- \tilde{\alpha}_{k}\vartheta_k)
  \quad \mbox{is parallel to} \quad
  H_k^{-1}\vartheta_k - \tilde{\alpha}_{k} \vartheta_k.
\end{equation}
Since $n=2$, we have $B_k B_k^T =B_k^T B_k=I$.
Then, it follows from  $g_k = B_k^T \vartheta_k$, $\vartheta_k = B_k g_k$,
 $H_k = B_k A B_k^T$ and $g_{k+1} = g_k - \tilde{\alpha}_k A g_k$  that
$g_{k+1}= B_k^T \vartheta_k+ B_k^T H_k(- \tilde{\alpha}_{k}\vartheta_k)$.
So, we have from \eqref{g2pal2} that \eqref{parallel-g2} holds.
Therefore, \eqref{new2} implies \eqref{parallel-g2} holds.

Now, it follows from \eqref{eqgrad} and $H_{k}^{-1} = B_k A^{-1} B_k^T$  that
\[
- B_k^T  H_{k}^{-1} \vartheta_k + \tilde{\alpha}_{k} g_{k}
= -  A^{-1} g_{k} + \tilde{\alpha}_{k} g_{k} = - A^{-1}(g_{k}- \tilde{\alpha}_{k} A g_{k})
= - A^{-1} g_{k+1}.
\]
Hence, \eqref{parallel-g2} implies $g_{k+1}$ is parallel to $A^{-1} g_{k+1}$.
So, if $\tilde{\alpha}_{k}$ given by \eqref{new2} is used at the $k$-th iteration,
then $g_{k+1}$ is parallel to $A^{-1} g_{k+1}$. Since $x_{k+1}$ is not the minimizer, we have
$g_{k+1} \ne 0$. So, $g_{k+1}$ is an eigenvector of $A$, i.e. $Ag_{k+1} = \lambda g_{k+1}$
for some $\lambda > 0$. Since $x_{k+2}$ is not the minimizer, we have $g_{k+2} \ne 0 $ and
the algorithm will not stop at the $k+2$-th iteration. So, by  \eqref{glstep1}, we have
$\alpha_{k+2}=\frac{g_{k+1}^T \Psi(A) g_{k+1}}{g_{k+1}^T \Psi(A)A g_{k+1}} = 1/\lambda$.
Hence, we have $g_{k+3}=(1-\alpha_{k+2}\lambda)g_{k+2} =0$, which implies $x_{k+3}$
 must be the minimizer. We complete the proof.
\qed
\end{proof}

Notice that by setting $k_0 =2$ in the above Theorem \ref{thftm},
the new gradient method in Theorem \ref{thftm} will find the exact minimizer in at most $5$ iterations
when minimizing a two-dimensional strongly convex quadratic function.
In fact, since $\Delta=\lambda_1\lambda_2$ and $H_k^{(11)}+H_k^{(22)}=\lambda_1+\lambda_2$,
the equation \eqref{eqalp2} has two positive roots $1/\lambda_1$ and $1/\lambda_2$.
This observation allows us to use the stepsize $\tilde{\alpha}_{k_0}$ with some retards
as stated in the following corollary, which would lead us a more convenient way for
choosing stepsizes when the objective function is not quadratic.

\begin{corollary}
Suppose that a gradient method is applied to a two-dimensional quadratic function \eqref{qudpro}
 with $\alpha_{k_0+m}=\tilde{\alpha}_{k_0}$ for $k_0\geq2$ and some positive integer $m$,
 and $\alpha_k$ given by \eqref{glstep1} for all $k \neq k_0+m$.
Then, the method stops in at most $k_0+m+3$ iterations.
\end{corollary}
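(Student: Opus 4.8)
The plan is to reduce the corollary to Theorem \ref{thftm} by exploiting the key observation, recorded in the remark following that theorem, that in the two-dimensional case the special stepsize $\tilde{\alpha}_{k_0}$ is not genuinely specific to the index $k_0$: it is always the reciprocal of an eigenvalue of $A$. Concretely, I would first argue that the number produced by formula \eqref{new2} is the same at every admissible iteration, and then simply invoke Theorem \ref{thftm} with its distinguished iteration taken to be $k_0+m$ instead of $k_0$. The whole point is that feeding the algorithm the precomputed value $\tilde{\alpha}_{k_0}$ at the later step $k_0+m$ is indistinguishable from feeding it the value $\tilde{\alpha}_{k_0+m}$ that the theorem would have prescribed there.

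First I would establish this invariance. Since $n=2$ we have $B_kB_k^T=B_k^TB_k=I$, so $H_k=B_kAB_k^T$ is an orthogonal similarity of $A$; consequently $\det(H_k)=\det(A)=\lambda_1\lambda_2$ and $H_k^{(11)}+H_k^{(22)}=\mathrm{tr}(H_k)=\mathrm{tr}(A)=\lambda_1+\lambda_2$ for \emph{every} iteration at which $H_k$ in \eqref{eqhes} is defined. Hence the quadratic \eqref{eqalp2} reduces to $\lambda_1\lambda_2\,\tilde{\alpha}^2-(\lambda_1+\lambda_2)\tilde{\alpha}+1=0$, whose roots are exactly $1/\lambda_1$ and $1/\lambda_2$, independently of the iteration index. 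Noting further that the quantity under the square root in \eqref{new2} is precisely $(H_k^{(11)}+H_k^{(22)})^2-4\det(H_k)$, a one-line rationalization shows that \eqref{new2} selects the smaller root $1/\lambda_2$. Thus $\tilde{\alpha}_{k_0}=\tilde{\alpha}_{k_0+m}=1/\lambda_2$, and this value is the same no matter which trajectory is used to evaluate the intermediate quantities.

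With this in hand the conclusion is immediate: the scheme in the corollary uses $1/\lambda_2=\tilde{\alpha}_{k_0+m}$ at iteration $k_0+m$ and the stepsize \eqref{glstep1} at all other iterations, which is exactly the hypothesis of Theorem \ref{thftm} with $k_0$ replaced by $k_0+m\,(\geq 2$, since $k_0\geq 2$ and $m\geq 1)$, giving termination in at most $(k_0+m)+3$ iterations. Alternatively one can argue directly, bypassing the full machinery of the theorem: by the component update \eqref{updgki}, applying stepsize $1/\lambda_2$ at iteration $k_0+m$ annihilates the $\lambda_2$-component, so $g_{k_0+m+1}$ is a multiple of an eigenvector of $A$, and the two subsequent steps, whose \eqref{glstep1}-stepsize is then forced to equal $1/\lambda_1$, drive the gradient to zero just as in the closing lines of the proof of Theorem \ref{thftm}. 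The only real obstacle is the invariance claim itself—confirming that $\tilde{\alpha}_{k_0}$ depends on neither the index nor the trajectory at which it is computed—which becomes routine once the trace and determinant identities for $H_k$ are in place; the remaining non-degeneracy bookkeeping (existence and nonvanishing of $q_{k-1}$ in \eqref{defqk}, and the intermediate iterates not already being minimizers) is inherited from the standing assumptions of Theorem \ref{thftm} and only improves the ``at most'' bound when it fails.
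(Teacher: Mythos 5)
Your proof is correct and takes essentially the same route as the paper: the remark immediately preceding the corollary---that $\Delta=\lambda_1\lambda_2$ and $H_{k}^{(11)}+H_{k}^{(22)}=\lambda_1+\lambda_2$ force the roots of \eqref{eqalp2} to be $1/\lambda_1$ and $1/\lambda_2$---is precisely the paper's intended justification, and your index-invariance claim ($\tilde{\alpha}_k\equiv 1/\lambda_2$, the smaller root, via the trace/determinant identities for $H_k=B_kAB_k^T$) is a correct sharpening of it, after which the closing lines of the proof of Theorem \ref{thftm} finish the argument exactly as you describe. One cosmetic slip worth noting: at iteration $k_0+m+1$ the stepsize from \eqref{glstep1} is computed from $g_{k_0+m}$ and need not equal $1/\lambda_1$---it merely rescales the eigenvector $g_{k_0+m+1}$ without changing its direction---and only $\alpha_{k_0+m+2}$ is forced to equal $1/\lambda_1$, which is the step that annihilates $g_{k_0+m+3}$.
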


By setting $\Psi(A)=I$, $\Psi(A)=A$ and $r=1/2$ in \eqref{eqhes}, and setting $k_0=k$
in \eqref{new2}, we can derive the following two stepsizes:
\begin{equation}\label{newsbb1}
  \tilde{\alpha}_k^{BB1}=  \frac{2}{\frac{q_{k-1}^TAq_{k-1}}{\|q_{k-1}\|^2} + \frac{1}{\alpha_k^{SD}}+
  \sqrt{\left(\frac{q_{k-1}^TAq_{k-1}}{\|q_{k-1}\|^2} - \frac{1}{\alpha_k^{SD}}\right)^2+\frac{4(q_{k-1}^TAg_k)^2}{\|q_{k-1}\|^2\|g_k\|^2}}}
\end{equation}
and
\begin{equation}\label{newsbb2}
  \tilde{\alpha}_{k}^{BB2}=
  \frac{2}{\frac{1}{\hat{\alpha}_{k-1}}
  +\frac{1}{\alpha_k^{MG}}+
  \sqrt{\left(\frac{1}{\hat{\alpha}_{k-1}}-\frac{1}{\alpha_k^{MG}}\right)^2
  +\Gamma_k}},
\end{equation}
respectively, where
\begin{equation}\label{snew21}
  \hat{\alpha}_{k}=\frac{q_{k}^TAq_{k}}{q_{k}^TA^2q_{k}}~~\textrm{and}~~
  \Gamma_k=\frac{4(q_{k-1}^TA^2g_k)^2}{q_{k-1}^TAq_{k-1}\cdot g_k^TA g_k}.
\end{equation}
By \eqref{newsbb1} and \eqref{newsbb2}, we have
\begin{equation}\label{upbdtalp3}
  \tilde{\alpha}_k^{BB1}\leq\min\left\{\alpha_k^{SD},\frac{\|q_{k-1}\|^2}{q_{k-1}^TAq_{k-1}}\right\}
  ~~\textrm{and}~~
  \tilde{\alpha}_{k}^{BB2}\leq\min\{\alpha_k^{MG},\hat{\alpha}_{k-1}\}.
\end{equation}
Hence, both $\tilde{\alpha}_k^{BB1}$ and $\tilde{\alpha}_k^{BB2}$ are short monotone steps
for reducing the value and gradient norm of the objective function, respectively.
And it follows from Theorem \ref{thftm} that by inserting the monotone steps
$\tilde{\alpha}_k^{BB1}$ and $\tilde{\alpha}_k^{BB2}$ into
the BB1 and BB2 methods, respectively, the gradient method will have finite termination
for minimizing two-dimensional strongly convex quadratic functions.

To numerically verify this finite termination property, we apply the method \eqref{glstep1} with $\Psi(A)=I$ (i.e., the BB1 method) and $\tilde{\alpha}_2^{BB1}$ given by \eqref{newsbb1} to minimize a two-dimensional quadratic function \eqref{qudpro} with
\begin{equation}\label{twoquad}
  A=\textrm{diag}\{1,\lambda\} \quad \mbox{and} \quad b=0.
\end{equation}
We run the algorithm for five iterations using ten random starting points.
The averaged values of $\|g_5\|$ and $f(x_5)$ are presented in Table \ref{tb2ft}. Moreover, we also run
 BB1 method for a comparison purpose. We can observe that for different values of $\lambda$, the values of $\|g_5\|$ and $f(x_5)$ obtained by BB1 method with $\tilde{\alpha}_2^{BB1}$ given by \eqref{newsbb1} are numerically very close to zero. However, even for the case $\lambda=10$, $\|g_5\|$ and $f(x_5)$ obtained by pure BB1 method are far away from zero. These numerical results coincide with our analysis and show that the nonmonotone method \eqref{glstep1} can be significantly accelerated by incorporating proper monotone steps.

\linespread{1.2}
\begin{table}[ht!b]
\setlength{\tabcolsep}{2ex}
{\footnotesize
\caption{Averaged results on problem \eqref{twoquad} with different condition numbers}\label{tb2ft}
\begin{center}
\begin{tabular}{|c|c|c|c|c|}
\hline
 \multicolumn{1}{|c|}{\multirow{2}{*}{$\lambda$}}
&\multicolumn{2}{c|}{\multirow{1}{*}{BB1}}
&\multicolumn{2}{c|}{\multirow{1}{*}{BB1 with $\tilde{\alpha}_2^{BB1}$ given by \eqref{newsbb1}}}\\
\cline{2-5}
 &\multicolumn{1}{c|}{$\|g_5\|$} &\multicolumn{1}{c|}{$f(x_5)$}
&\multicolumn{1}{c|}{$\|g_5\|$} &\multicolumn{1}{c|}{$f(x_5)$}\\
 \hline
10  &   6.6873e+00  &   4.8701e+00  &   1.1457e-16  &   5.8735e-31\\
 \hline
100  &   8.6772e+01  &   1.9969e+02  &   2.1916e-16  &   2.8047e-30\\
\hline
1000  &   2.0925e+02  &   9.7075e+01  &   3.4053e-19  &   2.3730e-29\\
\hline
10000  &   1.7943e+02  &   9.6935e+00  &   5.1688e-19  &   6.7870e-28\\
\hline
\end{tabular}
\end{center}
}
\end{table}

\section{New methods}\label{secarbbmd}
In this section, based on the above analysis, we propose an adaptive nonmonotone gradient method (ANGM)
and its two variants, ANGR1 and ANGR2, for solving both unconstrained and box constrained optimization.
These new gradient methods adaptively take some nonmonotone steps involving the long and short
 BB stepsizes \eqref{sBB}, and some monotone steps using the new stepsize developed in the previous section.

\subsection{Quadratic case}

As mentioned in Section \ref{submot}, the stepsizes $\alpha_{k}^{BB1}$ generated by the BB1 method may be far away from the reciprocals of
the largest eigenvalues of the Hessian matrix $A$ of the quadratic function \eqref{qudpro}.
In other words, the stepsize $\alpha_{k}^{BB1}$ may be too large to effectively decrease the
components of gradient $g_k$ corresponding to the first several largest eigenvalues,
which, by \eqref{updgki}, can be greatly reduced when small stepsizes are employed.
In addition, it has been observed by many works in the recent literature that gradient methods using long and short stepsizes adaptively generally
perform much better than those using only one type of stepsizes, for example see \cite{dai2003altermin,de2014efficient,di2018steplength,gonzaga2016steepest,huang2019gradient,huang2019asymptotic}.
So, we would like to develop gradient methods that combines the two nonmonotone BB stepsizes
with the short monotone stepsize given by \eqref{new2}.

We first extend the orthogonal property developed in Lemma \ref{orpty} and the finite termination result given in Theorem \ref{thftm}.

\begin{lemma}[Generalized orthogonal property]\label{gorpty}
Suppose that a gradient method \eqref{eqitr} with stepsizes in the form of \eqref{glstep1} is applied to minimize a quadratic function \eqref{qudpro}.
In particular, at the $k-1$-th and $k$-th iteration, two stepsizes $\alpha_{k-1}(\Psi(A))$ and $\alpha_k(\Psi_1(A))$ are used,
respectively, where  $\Psi$ and $\Psi_1$ may be two different analytic functions used in \eqref{glstep1}.
If $q_k \in \mathbb{R}^n$ satisfies
\begin{equation}\label{gdefqk}
  (I-\alpha_{k-1}(\Psi(A))A)q_k=g_{k-1},
\end{equation}
then we have
  \begin{equation}\label{gorthg}
  q_k^T\Psi_1(A)g_{k+1}=0.
\end{equation}
\end{lemma}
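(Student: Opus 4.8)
The plan is to mimic the proof of Lemma \ref{orpty} almost verbatim, since Lemma \ref{gorpty} is precisely its generalization to the case where two possibly different analytic functions $\Psi$ and $\Psi_1$ are used at consecutive iterations. The key observation driving the original proof is that the stepsize formula \eqref{glstep1} is constructed so that $g_{k-1}^T\Psi(A)g_{k-1}-\alpha_k\,g_{k-1}^T\Psi(A)Ag_{k-1}=0$; this identity is self-contained in the sense that it only involves the single function appearing in that one stepsize. The presence of a \emph{different} function $\Psi$ at the $(k-1)$-th step and $\Psi_1$ at the $k$-th step should not interfere, because each function enters only through its own iteration.

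Concretely, I would first use \eqref{eqitr} together with the fact that $f$ is the quadratic \eqref{qudpro}, so that $g_{k+1}=(I-\alpha_kA)g_k$ and $g_k=(I-\alpha_{k-1}(\Psi(A))A)g_{k-1}$, where $\alpha_k=\alpha_k(\Psi_1(A))$. Substituting these into the left-hand side of \eqref{gorthg} and using the defining relation \eqref{gdefqk} for $q_k$ gives
\begin{align*}
q_k^T\Psi_1(A)g_{k+1}
&=q_k^T\Psi_1(A)(I-\alpha_kA)(I-\alpha_{k-1}(\Psi(A))A)g_{k-1}\\
&=q_k^T(I-\alpha_{k-1}(\Psi(A))A)\Psi_1(A)(I-\alpha_kA)g_{k-1}\\
&=g_{k-1}^T\Psi_1(A)(I-\alpha_kA)g_{k-1},
\end{align*}
where the second line uses that $A$, $\Psi(A)$ and $\Psi_1(A)$ are all polynomials (or Laurent series) in the single matrix $A$ and hence commute, and the third line applies \eqref{gdefqk}. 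Finally, expanding the last expression and invoking the definition of $\alpha_k=\alpha_k(\Psi_1(A))$ from \eqref{glstep1} yields
\begin{align*}
g_{k-1}^T\Psi_1(A)(I-\alpha_kA)g_{k-1}
&=g_{k-1}^T\Psi_1(A)g_{k-1}-\alpha_k\,g_{k-1}^T\Psi_1(A)Ag_{k-1}=0.
\end{align*}

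The only point requiring genuine care, and the step I would flag as the main obstacle, is the commutativity manipulation in the second line: one must verify that $\Psi_1(A)$ commutes with $(I-\alpha_{k-1}(\Psi(A))A)$ so that the factor carrying $\Psi(A)$ can be moved over to act on $q_k$ via \eqref{gdefqk}. This is automatic here because every operator in sight is a function of the \emph{same} symmetric matrix $A$ (and the scalars $\alpha_{k-1}(\Psi(A))$, $\alpha_k(\Psi_1(A))$ are just numbers once the iterates are fixed), so all the relevant matrices are simultaneously diagonalizable and pairwise commuting. Once this is noted, the cancellation that closed the original proof reappears with $\Psi$ replaced by $\Psi_1$ in exactly the right place, and the generalized orthogonality \eqref{gorthg} follows. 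The essential reason the mismatched functions cause no trouble is that $\Psi$ appears only through the construction of $q_k$ in \eqref{gdefqk}, while $\Psi_1$ appears only through the inner product and through $\alpha_k$, so the two never need to interact.
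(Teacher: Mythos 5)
Your proposal is correct and follows essentially the same route as the paper: the paper's proof simply notes $g_k=(I-\alpha_{k-1}(\Psi(A))A)g_{k-1}$ and $g_{k+1}=(I-\alpha_k(\Psi_1(A))A)g_k$ and then repeats the computation of Lemma~\ref{orpty}, which is exactly your chain of identities, relying on the same facts you flag explicitly (symmetry of $A$ and commutativity of functions of $A$, so that $q_k^T(I-\alpha_{k-1}(\Psi(A))A)=g_{k-1}^T$, and the cancellation from the definition of $\alpha_k(\Psi_1(A))$). Your write-up just makes the commuting/transposition step explicit where the paper leaves it implicit.
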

\begin{proof}
Notice that by  \eqref{eqitr}, we have
\begin{equation*}
  g_k=g_{k-1}-\alpha_{k-1}(\Psi(A))Ag_{k-1} \quad \mbox{and} \quad g_{k+1}=g_k-\alpha_k(\Psi_1(A))Ag_k.
\end{equation*}
Then, the proof is essential the same as those in the proof of Lemma \ref{orpty}. \qed
\end{proof}

Based on Lemma \ref{gorpty} and using the same arguments as those in the proof of Theorem \ref{thftm}, we can obtain the following finite termination result
even different function $\Psi$'s are used in \eqref{glstep1} to obtain the stepsizes.
\begin{theorem}[Generalized finite termination]\label{gthftm}
Suppose that a gradient method \eqref{eqitr} is applied to minimize a two-dimensional quadratic function \eqref{qudpro} with $\alpha_k$ given by \eqref{glstep1}
for all $k \neq k_0$ and $k \ne k_0-1$, and uses the stepsizes $\alpha_{k-1}(\Psi_1(A))$ and $\alpha_k(\Psi_1(A))$ at the $k-1$-th and $k$-th iteration, respectively,
where $k_0 \geq2$.  Then, the method will find the minimizer in at most $k_0+3$ iterations.
\end{theorem}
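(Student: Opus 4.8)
The plan is to follow the proof of Theorem~\ref{thftm} almost verbatim, the single new ingredient being that the orthogonality driving the two-dimensional reduction must now be produced by Lemma~\ref{gorpty} rather than by Lemma~\ref{orpty}, so that it holds with respect to the function $\Psi_1$ that is actually used to build the finite-termination stepsize \eqref{new2} at the $k_0$-th iteration. Concretely, I would first invoke Lemma~\ref{gorpty} with its index shifted down by one: since iteration $k_0-2$ uses some admissible function $\Psi$ in \eqref{glstep1} while iteration $k_0-1$ uses $\Psi_1$, the vector $q_{k_0-1}$ determined by \eqref{gdefqk}, namely $(I-\alpha_{k_0-2}(\Psi(A))A)q_{k_0-1}=g_{k_0-2}$, satisfies $q_{k_0-1}^T\Psi_1(A)g_{k_0}=0$. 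Two remarks accompany this step: the function used at iteration $k_0-2$ to form $q_{k_0-1}$ is immaterial, only the use of $\Psi_1$ at iteration $k_0-1$ matters; and $q_{k_0-1}$ is well defined unless $\alpha_{k_0-2}$ coincides with a reciprocal eigenvalue of $A$, in which case $g_{k_0-1}$ is already an eigenvector and termination occurs even earlier, so that degenerate case may be set aside.

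With $q_{k_0-1}^T\Psi_1(A)g_{k_0}=0$ secured, I would reconstruct the two-dimensional framework \eqref{ftu1}--\eqref{eqhes} with every occurrence of $\Psi$ replaced by $\Psi_1$. The orthogonality is exactly the condition that the two spanning directions $\Psi_1^r(A)q_{k_0-1}$ and $\Psi_1^{1-r}(A)g_{k_0}$ be orthogonal, so the matrix $B_{k_0}$ in \eqref{eqgrad} again satisfies $B_{k_0}B_{k_0}^T=B_{k_0}^TB_{k_0}=I$. Consequently $H_{k_0}=B_{k_0}AB_{k_0}^T$ is an orthogonal similarity of the two-dimensional Hessian, whence $\det(H_{k_0})=\det(A)=\lambda_1\lambda_2$ and $H_{k_0}^{(11)}+H_{k_0}^{(22)}=\lambda_1+\lambda_2$. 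These two identities are all that the computation in Theorem~\ref{thftm} uses about $H_{k_0}$.

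From this point the argument of Theorem~\ref{thftm} applies without change. The stepsize $\tilde{\alpha}_{k_0}$ built from $H_{k_0}$ via \eqref{new2} solves the quadratic \eqref{eqalp2}, which through the identity \eqref{parallel-explicit} and the parallelism \eqref{g2pal2}--\eqref{parallel-g2} forces $g_{k_0+1}$ to be parallel to $A^{-1}g_{k_0+1}$; hence $g_{k_0+1}$ is an eigenvector of $A$, say $Ag_{k_0+1}=\lambda g_{k_0+1}$. I would then note that this eigenvector status survives every later step, regardless of which analytic function is used: for any admissible $\Psi_2$, the ratio defining $\alpha_{k_0+2}(\Psi_2(A))$ in \eqref{glstep1} collapses to $1/\lambda$ because $Ag_{k_0+1}=\lambda g_{k_0+1}$, while $g_{k_0+2}=(1-\alpha_{k_0+1}\lambda)g_{k_0+1}$ stays a multiple of the same eigenvector. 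Therefore $g_{k_0+3}=(1-\alpha_{k_0+2}\lambda)g_{k_0+2}=0$, and the method terminates in at most $k_0+3$ iterations.

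The only step that genuinely requires care---and the reason Lemma~\ref{gorpty} must replace Lemma~\ref{orpty}---is the very first one: the orthogonality has to hold against the same $\Psi_1$ that later forms $H_{k_0}$ and the stepsize \eqref{new2}. If some function other than $\Psi_1$ were used at iteration $k_0-1$, the telescoping of $q_{k_0-1}^T\Psi_1(A)g_{k_0}$ down to $g_{k_0-2}^T\Psi_1(A)g_{k_0-2}-\alpha_{k_0-1}\,g_{k_0-2}^T\Psi_1(A)Ag_{k_0-2}$ would no longer close, and this quantity vanishes precisely because $\alpha_{k_0-1}$ is the $\Psi_1$-stepsize determined by $g_{k_0-2}$. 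Everything downstream of the orthogonality is insensitive to the functions used at the remaining iterations, which is exactly what upgrades the conclusion of Theorem~\ref{thftm} to the present mixed-$\Psi$ setting.
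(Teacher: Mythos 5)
Your proposal is correct and takes essentially the same approach as the paper: the paper offers no separate proof of Theorem~\ref{gthftm}, saying only that it follows from Lemma~\ref{gorpty} combined with the argument of Theorem~\ref{thftm}, which is exactly what you carry out, with the right index shift (applying Lemma~\ref{gorpty} at $k=k_0-1$ to get $q_{k_0-1}^T\Psi_1(A)g_{k_0}=0$), the $\Psi_1$-based reconstruction of $B_{k_0}$ and $H_{k_0}$, and the correct observation that the eigenvector argument after iteration $k_0$ is insensitive to which admissible $\Psi$'s are used there. Your additional remarks---that only the use of $\Psi_1$ at iteration $k_0-1$ is essential because it is what closes the telescoping, and that the degenerate case where $q_{k_0-1}$ fails to exist forces even earlier termination---go slightly beyond the paper's level of detail but are consistent with it.
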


Theorem \ref{gthftm} allows us to incorporate the nonmonotone BB stepsizes $\alpha_{k}^{BB1}$ and  $\alpha_{k}^{BB2}$, and the short monotone stepsize
$\tilde{\alpha}_{k}^{BB2}$ in one gradient method.
Alternate or adaptive scheme has been employed for choosing long and short stepsizes in BB-type methods \cite{dai2005projected,zhou2006gradient}.
And recent studies show that adaptive strategies are more preferred  than the alternate scheme \cite{dhl2018,zhou2006gradient}.
Hence, we would like develop adaptive strategies to choose proper stepsizes for our new gradient methods.
In particular, our adaptive nonmonotone gradient method (ANGM) takes the long BB stepsize $\alpha_{k}^{BB1}$ when $\alpha_{k}^{BB2}/\alpha_{k}^{BB1}\geq\tau_1$ for some $\tau_1\in(0,1)$.
Otherwise, a short stepsize  $\alpha_{k}^{BB2}$ or $\tilde{\alpha}_{k}^{BB2}$ will be taken depending on the ratio $\|g_{k-1}\|/\|g_{k}\|$.
Notice that $\alpha_{k}^{BB2}$ minimizes the gradient in the sense that
\begin{equation*}
  \alpha_{k}^{BB2}=\alpha_{k-1}^{MG}=\arg\min_{\alpha \in \mathbb{R}} \|g_{k-1}-\alpha Ag_{k-1}\|.
\end{equation*}
So, when $\|g_{k-1}\|/\|g_{k}\| > \tau_2 $ for some $\tau_2 >1$, i.e. the gradient norm decreases, the previous stepsize $\alpha_{k-1}$
is often a reasonable approximation of  $\alpha_{k}^{BB2}$.
By our numerical experiments, when BB method is applied the searches are often dominated in some
 two-dimensional subspaces.
And the new gradient method in Theorem \ref{gthftm} would have finite convergence for minimizing two-dimensional quadratic function
when the new stepsize $\tilde{\alpha}_{k}^{BB2}$ is applied after some BB2 steps.
Hence, our ANGM would employ the new monotone stepsize $\tilde{\alpha}_{k}^{BB2}$ when $\|g_{k-1}\|\geq\tau_2\|g_{k}\|$;
otherwise, certain BB2 steps should be taken. In practice, we find that when $\|g_{k-1}\| \le \tau_2\|g_{k}\|$, ANGM often
has good performance by taking the stepsize  $\min\{\alpha_{k}^{BB2},\alpha_{k-1}^{BB2}\}$.
To summarize, our ANGM applies the following adaptive strategies for choosing stepsizes:
\begin{equation}\label{newmd0}
\alpha_{k}=
\begin{cases}
\min\{\alpha_{k}^{BB2},\alpha_{k-1}^{BB2}\},& \text{if $\alpha_{k}^{BB2}<\tau_1\alpha_{k}^{BB1}$ and $\|g_{k-1}\|<\tau_2\|g_{k}\|$}; \\
\tilde{\alpha}_{k}^{BB2},& \text{if $\alpha_{k}^{BB2}<\tau_1\alpha_{k}^{BB1}$ and $\|g_{k-1}\|\geq\tau_2\|g_{k}\|$}; \\
\alpha_{k}^{BB1},& \text{otherwise},
\end{cases}
\end{equation}

Notice that the calculation of $\tilde{\alpha}_{k}^{BB2}$ needs to compute $\alpha_k^{MG}$ which is not easy to obtain when the objective function is not quadratic.
In stead, the calculation of $\tilde{\alpha}_{k-1}^{BB2}$ will just require $\alpha_k^{BB2}$,
which is readily available even for general objective function. Moreover, it is found in recent research that
gradient methods using retard stepsizes can often lead better performances \cite{friedlander1998gradient}.
Hence, in the first variant of ANGM, we simply replace $\tilde{\alpha}_{k}^{BB2}$ in \eqref{newmd0} by
$\tilde{\alpha}_{k-1}^{BB2}$, i.e. the stepsizes are chosen as
\begin{equation}\label{newmd1}
\alpha_{k}=
\begin{cases}
\min\{\alpha_{k}^{BB2},\alpha_{k-1}^{BB2}\},& \text{if $\alpha_{k}^{BB2}<\tau_1\alpha_{k}^{BB1}$ and $\|g_{k-1}\|<\tau_2\|g_{k}\|$}; \\
\tilde{\alpha}_{k-1}^{BB2},& \text{if $\alpha_{k}^{BB2}<\tau_1\alpha_{k}^{BB1}$ and $\|g_{k-1}\|\geq\tau_2\|g_{k}\|$}; \\
\alpha_{k}^{BB1},& \text{otherwise}.
\end{cases}
\end{equation}
We call the gradient method using stepsize \eqref{newmd1} ANGR1.
On the other hand, since the calculation of $\tilde{\alpha}_{k-1}^{BB2}$ also needs $\hat{\alpha}_{k-2}$ and $\Gamma_{k-1}$ and
by \eqref{upbdtalp3},
\begin{equation}\label{upbdtalp2}
  \tilde{\alpha}_{k-1}^{BB2}\leq\min\{\alpha_k^{BB2},\hat{\alpha}_{k-2}\},
\end{equation}
to simplify ANGR1, we may further replace  $\tilde{\alpha}_{k-1}^{BB2}$ in \eqref{newmd1} by its upper bound in \eqref{upbdtalp2}.
As a result, we have the second variant of ANGM, which chooses stepsizes as
\begin{equation}\label{newmd2}
\alpha_{k}=
\begin{cases}
\min\{\alpha_{k}^{BB2},\alpha_{k-1}^{BB2}\},& \text{if $\alpha_{k}^{BB2}<\tau_1\alpha_{k}^{BB1}$ and $\|g_{k-1}\|<\tau_2\|g_{k}\|$}; \\
\min\{\alpha_k^{BB2},\hat{\alpha}_{k-2}\},& \text{if $\alpha_{k}^{BB2}<\tau_1\alpha_{k}^{BB1}$ and $\|g_{k-1}\|\geq\tau_2\|g_{k}\|$}; \\
\alpha_{k}^{BB1},& \text{otherwise}.
\end{cases}
\end{equation}
We call the gradient method using stepsize \eqref{newmd2} ANGR2.

In terms of global convergence for minimizing quadratic function \eqref{qudpro},
by \eqref{upbdtalp3}, we can easily show the $R$-linear global convergence of ANGM since it satisfies the property in \cite{dai2003alternate}.
Similarly, $R$-linear convergence of ANGR1 and ANGR2 can be also established. See the proof of Theorem~3 in \cite{dhl2018} for example.

\begin{remark}
Compared with other gradient methods, ANGM, ANGR1 and ANGR2 do not need additional matrix-vector products.
In fact, it follows from \eqref{defqk} that $Aq_k=\frac{1}{\alpha_{k-1}}(q_k-g_{k-1})$,
which gives
\begin{equation}\label{snew2}
  \hat{\alpha}_{k}=\frac{q_{k}^TAq_{k}}{q_{k}^TA^2q_{k}}
=\frac{\alpha_{k-1}q_{k}^T(q_{k}-g_{k-1})}
{(q_{k}-g_{k-1})^T(q_{k}-g_{k-1})}.
\end{equation}
Hence,  no additional matrix-vector products are needed for calculation of
$\hat{\alpha}_{k-1}$ in $\tilde{\alpha}_{k}^{BB2}$, $\hat{\alpha}_{k-2}$ in $\tilde{\alpha}_{k-1}^{BB2}$
and the stepsize used in ANGR2.
Since the calculation of $Ag_k$ is necessary for the calculation of $g_{k+1}$, $\Gamma_k$ in $\tilde{\alpha}_{k}^{BB2}$ requires no additional matrix-vector products either.
 As for $\tilde{\alpha}_{k-1}^{BB2}$, it follows from
$g_{k-1}^TA^2q_{k-2}=\frac{1}{\alpha_{k-3}}(q_{k-2}-g_{k-3})^T A g_{k-1}$ and $Ag_{k-1}=\frac{1}{\alpha_{k-1}}(g_{k-1}-g_{k})$ that
\begin{align}
  \Gamma_{k-1}&
=\frac{4((q_{k-2}-g_{k-3})^T Ag_{k-1})^2}{\alpha_{k-3}((q_{k-2}-g_{k-3})^Tq_{k-2})\cdot g_{k-1}^TA g_{k-1}}\nonumber\\
&=\frac{4((q_{k-2}-g_{k-3})^T(g_{k-1}-g_{k}))^2}
{\alpha_{k-3}\alpha_{k-1}((q_{k-2}-g_{k-3})^Tq_{k-2})\cdot g_{k-1}^T(g_{k-1}-g_{k})}.
\end{align}
Thus, no additional matrix-vector products are required for calculation of $\Gamma_{k-1}$
 in $\tilde{\alpha}_{k-1}^{BB2}$.
\end{remark}

\begin{remark}
Notice that all the new methods, ANGM, ANGR1 and ANGR2, require the vector $q_{k}$ for calculation
of their stepsizes.
However, computing $q_k$ exactly from \eqref{defqk} maybe as difficult as minimizing the quadratic function.
Notice that the $q_k$ satisfying \eqref{defqk} also satisfies the secant equation
\begin{equation}\label{secant-cond}
q_k^Tg_{k}=\|g_{k-1}\|^2.
\end{equation}
Hence, we may find an approximation of $q_k$ by requiring the above secant condition holds.
One efficient way to find such a $q_k$ satisfying the secant equation \eqref{secant-cond} is to simply
 treat the Hessian matrix $A$ as the diagonal matrix \eqref{formA} and
derive $q_k$ from \eqref{defqk}, that is when $g_k^{(i)} \ne 0$,
\begin{equation}\label{defq}
  q_k^{(i)}=\frac{g_{k-1}^{(i)}}{1-\alpha_{k-1}\lambda_i}=
  \frac{(g_{k-1}^{(i)})^2}{g_k^{(i)}},~~i=1,\ldots,n,
\end{equation}
And we can just let $q_k^{(i)} = 0$, if  $g_k^{(i)}=0$.
To summarize, the approximated $q_k$ can be computed by
  \begin{equation}\label{defq2}
  q_k^{(i)}=
\left\{
              \begin{array}{ll}
                \frac{(g_{k-1}^{(i)})^2}{g_k^{(i)}}, & \hbox{if $g_k^{(i)}\neq0$;} \\
                0, & \hbox{if $g_k^{(i)}=0$.}
              \end{array}
            \right.
\end{equation}
As we will see in Section \ref{secnum}, this simple way of calculating $q_k$
leads very efficient algorithm.
\end{remark}

For a simple illustration of numerical behavior of ANGR1, we again applied ANGR1 with
$\tau_1=0.85$ and $\tau_2=1.3$ to solve problem \eqref{pro2} with $n=10$.
Fig. \ref{bbabbrg1} shows the largest component $|g_k^{(i_1)}|$ of the gradient
generated by  BB1 and ANGR1 methods
against the iteration number, where circle means the ANGR1 method takes the new stepsize
 $\tilde{\alpha}_{k}^{BB2}$ at that iteration. It can be seen that $|g_k^{(i_1)}|$ generated by
  BB1 method often increases significantly with a much larger value at the iteration where the new
stepsize $\tilde{\alpha}_{k}^{BB2}$ is applied. On the other hand,  $|g_k^{(i_1)}|$ generated by the ANGR1 method
is often reduced and kept small after the new stepsize $\tilde{\alpha}_{k}^{BB2}$ is applied.
A detail correspondence of $i_1$ and $\lambda_j$ is presented in Table \ref{bb1angmnj},
where $n_j$ is the total number of  $i_1's$ for which $i_1=j$, $j=1,\ldots,10$.
We can see from the last three columns in Table \ref{bb1angmnj} that
ANGR1 is much efficient than BB1 for decreasing those components of $g_k$ corresponding to large eigenvalues.
Hence, the undesired behavior of BB1 discussed in the motivation Section~\ref{submot}
is greatly eliminated by ANGR1.

\begin{figure}[thp!b]
  \centering
  \includegraphics[width=0.65\textwidth,height=0.45\textwidth]{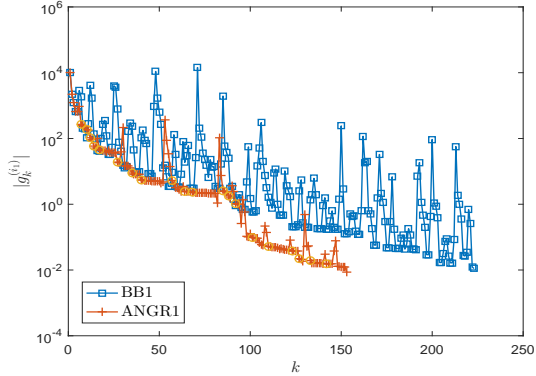}
  \caption{Problem \eqref{pro2} with $n=10$: history of $|g_k^{(i_1)}|$ generated by the BB1 and ANGR1 methods}\label{bbabbrg1}
\end{figure}

\begin{table}[ht!b]
{
\caption{The correspondence of $i_1$ and $\lambda_j$ by the BB1 and ANGR1 methods}\label{bb1angmnj}
\begin{center}
\begin{tabular}{|c|c|c|c|c|c|c|c|c|c|c|}
\hline
$n_j$ &1 &2 &3 &4 &5 &6 &7 &8 &9 &10 \\
 \hline
BB1 &40 &5 &6 &4 &6 &2 &10 &21 &42 &87 \\
 \hline
ANGR1 &51 &4 &22 &18 &16 &4 &1 &8 &12 &17 \\
\hline
\end{tabular}
\end{center}
}
\end{table}

\subsection{Bound constrained case}
In this subsection, we would like to extend ANGR1 and ANGR2 methods for solving the bound constrained optimization
\begin{equation}\label{conprob}
\min_{x\in\Omega} ~~f(x),
\end{equation}
where $f$ is Lipschitz continuously differentiable on the feasible set
$\Omega=\{x\in\mathbb{R}^n|~l\leq x\leq u\}$. Here, $l \le x \le u$ means componentwise
$l_i \le x_i \le u_i$ for all $i=1, \ldots, n$.
Clearly, when $l_i=-\infty$ and $u_i=+\infty$ for all $i$, problem \eqref{conprob} reduces to the smooth unconstrained optimization.

Our methods will incorporate the gradient projection strategy and update the iterates as
\[
  x_{k+1}=x_{k}+\lambda_{k}d_{k},
\]
with $\lambda_{k}$ being a step length determined by some line searches and $d_k$ being the search direction given by
\begin{equation}\label{dirc}
  d_k=P_{\Omega}(x_k-\alpha_kg_k)-x_k,
\end{equation}
where $P_{\Omega}(\cdot)$ is the Euclidean projection onto $\Omega$ and $\alpha_k$ is our proposed stepsize.

It is well-known that the components of iterates generated by gradient descent methods
 corresponding to optimal solutions at the boundary
will be finally unchanged when the problem is nondegenerate. Hence, in \cite{huang2019gradient},
the authors suggest to use the following modified BB stepsizes for bound constrained optimization
\begin{equation}\label{bsbb-1-2}
 \bar{\alpha}_k^{BB1}  =\frac{s_{k-1}^Ts_{k-1}}{s_{k-1}^T\bar{y}_{k-1}} \quad \mbox{and} \quad  \bar{\alpha}_k^{BB2}=\frac{s_{k-1}^T\bar{y}_{k-1}}{\bar{y}_{k-1}^T\bar{y}_{k-1}},
\end{equation}
where $\bar{y}_{k-1}$ is given by
\begin{equation} \label{bary}
  \bar{y}_{k-1}^{(i)}=\left\{
                    \begin{array}{ll}
                      0, & \hbox{ if $s_{k-1}^{(i)}=0$;} \\
                      g_k^{(i)}-g_{k-1}^{(i)}, & \hbox{ otherwise.}
                    \end{array}
                  \right.
\end{equation}
Notice that $\alpha_k^{BB1} = \bar{\alpha}_k^{BB1}$. We will also do this modifications
for solving bound constrained optimization and
replace the two BB stepsizes in our new methods by $\bar{\alpha}_k^{BB1}$ and $\bar{\alpha}_k^{BB2}$.

As mentioned before, we expect to get short steps using our new stepsizes. Since \eqref{upbdtalp2} may not hold for general functions, we would impose $\bar{\alpha}_{k}^{BB2}$ as a safeguard.
As a result, our ANGR1 and ANGR2 methods for solving bound constrained optimization employ the following stepsizes:
\begin{equation}\label{newgmd1}
\bar{\alpha}_{k}=
\begin{cases}
\min\{\bar{\alpha}_{k-1}^{BB2},\bar{\alpha}_{k}^{BB2}\},& \text{if $\bar{\alpha}_{k}^{BB2}<\tau_1\bar{\alpha}_{k}^{BB1}$ and $\|\bar{g}_{k-1}\|<\tau_2\|\bar{g}_{k}\|$}; \\
\min\{\bar{\alpha}_{k}^{BB2},\tilde{\alpha}_{k-1}^{BB2}\},& \text{if $\bar{\alpha}_{k}^{BB2}<\tau_1\bar{\alpha}_{k}^{BB1}$ and $\|\bar{g}_{k-1}\|\geq\tau_2\|\bar{g}_{k}\|$}; \\
\bar{\alpha}_{k}^{BB1},& \text{otherwise},
\end{cases}
\end{equation}
and
\begin{equation}\label{newgmd2}
\bar{\alpha}_{k}=
\begin{cases}
\min\{\bar{\alpha}_{k-1}^{BB2},\bar{\alpha}_{k}^{BB2}\},& \text{if $\bar{\alpha}_{k}^{BB2}<\tau_1\bar{\alpha}_{k}^{BB1}$ and $\|\bar{g}_{k-1}\|<\tau_2\|\bar{g}_{k}\|$}; \\
\min\{\bar{\alpha}_{k}^{BB2},\hat{\alpha}_{k-2}\},& \text{if $\bar{\alpha}_{k}^{BB2}<\tau_1\bar{\alpha}_{k}^{BB1}$ and $\|\bar{g}_{k-1}\|\geq\tau_2\|\bar{g}_{k}\|$}; \\
\bar{\alpha}_{k}^{BB1},& \text{otherwise},
\end{cases}
\end{equation}
respectively, where $\tau_1\in(0,1)$, $\tau_2\geq1$, and $\bar{g}_{k}=P_{\Omega}(x_{k}-g_{k})-x_{k}$.

The overall algorithm of ANGR1 and ANGR2 for solving bound constrained optimization \eqref{conprob} are given in Algorithm \ref{al1},
where the adaptive nonmonotone line search by Dai and Zhang \cite{dai2001adaptive} is employed to ensure global convergence
and achieve better numerical performance. In particular, the step length $\lambda_{k} = 1$ is accepted if
\begin{equation} \label{nonmls}
f(x_{k}+d_{k})\leq f_{r} +\sigma g_{k}^Td_{k},
\end{equation}
where $f_{r}$ is the so-called reference function value and is adaptively updated by the rules given in \cite{dai2001adaptive} and  $\sigma\in(0,1)$ is a line search parameter. Once \eqref{nonmls} is not accepted, it will perform
an Armijo-type back tracking line search to find the step length $\lambda_{k}$ such that
\begin{equation} \label{nonmls-2}
f(x_{k}+\lambda_{k}d_{k})\leq \min\{f_{\max},f_{r}\}+\sigma\lambda_{k}g_{k}^Td_{k},
\end{equation}
where $f_{\max}$ is the maximal function value in recent $M$ iterations, i.e.,
\begin{equation*}
  f_{\max}=\max_{0\leq i\leq \min\{k,M-1\}} f(x_{k-i}).
\end{equation*}
This nonmonotone line search is observed specially suitable for BB-type methods \cite{dai2001adaptive}. Moreover, under standard assumptions, Algorithm \ref{al1} ensures convergence in the sense that $\lim\inf_{k\rightarrow\infty}\|\bar{g}_{k}\|=0$, see \cite{hager2006new}.

\begin{algorithm}[h]
\caption{Adaptive nonmonotone gradient method with retard steps}\label{al1}

\KwIn{$x_{0}\in \mathbb{R}^n$, $\epsilon,\sigma\in(0,1)$, $\tau>0$, $M\in\mathbb{N}$, $0<\alpha_{\min}\leq\alpha_{\max}$, $\alpha_{0}\in[\alpha_{\min},\alpha_{\max}]$, $k:=0$.}

\While{$\|\bar{g}_k\|>\epsilon$}{

 Compute the search direction $d_k$ by \eqref{dirc};

 Determine $\lambda_{k}$ by Dai-Zhang nonmonotone line search;

 $x_{k+1}=x_{k}+\lambda_{k}d_{k}$;

Compute $s_{k}=x_{k+1}-x_{k}$ and $\bar{y}_{k}$ by \eqref{bary};

\uIf{$s_{k}^T\bar{y}_{k}>0$}{
%
%
Compute stepsize $\bar{\alpha}_{k+1}$ by \eqref{newgmd1} or \eqref{newgmd2};

 $\alpha_{k+1} = \max\{\alpha_{\min},\min\{\bar{\alpha}_{k+1},\alpha_{\max}\}\};$
}
\Else
{
 $\alpha_{k+1} = 1/\|\bar{g}_{k+1}\|_\infty;$
}
$k:=k+1;$
}
\end{algorithm}

\section{Numerical results}\label{secnum}
In this section, we present numerical comparisons of ANGM, ANGR1, ANGR2
with some recent very successful gradient descent methods on solving
quadratic, general unconstrained and bound constrained problems.
All the comparison methods were implemented in Matlab (v.9.0-R2016a) and run on a laptop with an Intel Core i7, 2.9 GHz processor and 8 GB of RAM running Windows 10 system.

\subsection{Quadratic problems}

In this subsection, we compare ANGM, ANGR1 and ANGR2 with the BB1 \cite{barzilai1988two}, DY \cite{dai2005analysis}, ABBmin2 \cite{frassoldati2008new}, and SDC \cite{de2014efficient}
methods on solving quadratic optimization problems.

We first solve some randomly generated quadratic problems from \cite{yuan2006new}.
Particularly, we solve
 \begin{equation}\label{quad-test1}
 \min_{x \in \mathbb{R}^n} \: f(x)=(x-x^*)^TV(x-x^*),
\end{equation}
 where $x^*$ is randomly generated with components between $-10$ and $10$, $V=\textrm{diag}\{v_1,\ldots,v_n\}$ is a diagonal matrix with
 $v_1=1$ and $v_n=\kappa$, and $v_j$, $j=2,\ldots,n-1$, are generated by the \emph{rand} function between 1 and $\kappa$.

\begin{table}[th!b]
\caption{Distributions of $v_j$}\label{tbspe}
\centering
\begin{tabular}{|c|c|}
\hline
 \multirow{1}{*}{Problem} &\multicolumn{1}{c|}{Spectrum} \\
\hline
\multirow{1}{*}{1} &$\{v_2,\ldots,v_{n-1}\}\subset(1,\kappa)$	\\
\hline
 \multirow{2}{*}{2}
&$\{v_2,\ldots,v_{n/5}\}\subset(1,100)$	\\
&$\{v_{n/5+1},\ldots,v_{n-1}\}\subset(\frac{\kappa}{2},\kappa)$	\\
\hline
\multirow{2}{*}{3}
&$\{v_2,\ldots,v_{n/2}\}\subset(1,100)$	\\
&$\{v_{n/2+1},\ldots,v_{n-1}\}\subset(\frac{\kappa}{2},\kappa)$	\\
\hline
\multirow{2}{*}{4}
&$\{v_2,\ldots,v_{4n/5}\}\subset(1,100)$	\\
&$\{v_{4n/5+1},\ldots,v_{n-1}\}\subset(\frac{\kappa}{2},\kappa)$	\\
\hline
\multirow{3}{*}{5}
&$\{v_2,\ldots,v_{n/5}\}\subset(1,100)$	\\
&$\{v_{n/5+1},\ldots,v_{4n/5}\}\subset(100,\frac{\kappa}{2})$	\\
&$\{v_{4n/5+1},\ldots,v_{n-1}\}\subset(\frac{\kappa}{2},\kappa)$	\\
\hline
\end{tabular}
\end{table}

We have tested five sets of problems \eqref{quad-test1} with $n=1,000$ using
 different spectral distributions of the Hessian listed in Table \ref{tbspe}.
The algorithm is stopped
once the number of iteration exceeds 20,000 or the gradient norm is reduced by a factor of $\epsilon$, which is set to $10^{-6}, 10^{-9}$ and $10^{-12}$, respectively.
Three different condition numbers $\kappa=10^4, 10^5$ and $10^6$ are tested. For each value of $\kappa$ or $\epsilon$, 10 instances of the problem are randomly generated
and the averaged results obtained by the starting point $x_0=(0,\ldots,0)^T$ are presented.
For the ABBmin2 method, $\tau$ is set to 0.9 as suggested in \cite{frassoldati2008new}. The parameter pair $(h,s)$ of the SDC method
is set to $(8,6)$ which is more efficient than other choices for this test. The $q_k$ is calculated by \eqref{defq2} for our methods.

\begin{figure}[ht!b]
\centering
    \includegraphics[width=0.6\textwidth,height=0.45\textwidth]{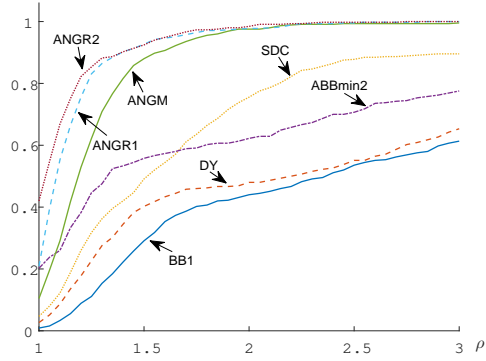}
  \caption{Performance profiles of compared methods on solving random quadratic problems  \eqref{quad-test1}
with spectral distributions in Table \ref{tbspe}, iteration metric}\label{frandp}
\end{figure}

\begin{table}[th!b]
\setlength{\tabcolsep}{0.15ex}
\caption{The numbers of averaged iterations of the ANGM, BB1, DY, ABBmin2 and SDC methods on solving quadratic problems \eqref{quad-test1}
with spectral distributions in Table \ref{tbspe}}\label{tbANGM}
\centering
\begin{tiny}
\begin{tabular}{|c|c|c|c|c|c|c|c|c|c|c|c|c|c|c|}
\hline
 \multicolumn{1}{|c|}{\multirow{2}{*}{set}} &\multicolumn{1}{c|}{\multirow{2}{*}{$\epsilon$}}
 &\multicolumn{9}{c|}{$\tau_1$ for ANGM} &\multirow{2}{*}{BB1} &\multirow{2}{*}{DY} &\multirow{2}{*}{ABBmin2} &\multirow{2}{*}{SDC}\\
\cline{3-11}
  \multicolumn{1}{|c|}{}& \multicolumn{1}{c|}{}  &0.1  &0.2  &0.3  &0.4  &0.5   &0.6   &0.7   &0.8  &0.9 & & & &\\
 \hline
\multicolumn{1}{|c|}{\multirow{3}{*}{1}	}																																				                                                  	
& 1e-06  &   200.6  &   198.6  &   213.3  &   193.6  &   204.1  &   196.5  &   211.1  &   219.0  &   230.3        &   221.0  &   197.1  &   \textbf{177.1}  &   183.2  \\
& 1e-09  &   665.0  &   678.1  &   677.5  &   937.3  &   718.7  &   768.5  &  1259.4  &  1198.1  &  1195.3        &  2590.0  &  2672.7  &   \textbf{428.8}  &  2029.6  \\
& 1e-12  &   939.7  &   941.6  &   901.5  &  1159.2  &   951.2  &  1043.7  &  1464.3  &  1339.0  &  1411.9        &  6032.9  &  6353.9  &   \textbf{560.3}  &  4087.2  \\
\hline

\multicolumn{1}{|c|}{\multirow{3}{*}{2}}
& 1e-06  &   140.3  &   \textbf{140.2}  &   143.1  &   161.2  &   172.8  &   171.1  &   217.3  &   265.8  &   310.3       &   311.2  &   261.5  &   302.5  &   160.6  \\
& 1e-09  &   \textbf{546.1}  &   590.5  &   641.1  &   779.5  &   850.5  &   891.9  &  1055.7  &  1161.3  &  1299.0       &  1665.4  &  1340.0  &  1321.1  &   735.1  \\
& 1e-12  &   \textbf{895.1}  &  1025.2  &  1098.1  &  1328.6  &  1446.6  &  1598.2  &  1739.6  &  1908.7  &  2107.0       &  2820.8  &  2434.6  &  2267.5  &  1346.8  \\
\hline

\multicolumn{1}{|c|}{\multirow{3}{*}{3}}
& 1e-06  &   \textbf{163.2}  &   170.1  &   177.3  &   193.3  &   216.7  &   231.1  &   283.3  &   318.4  &   363.9        &   388.4  &   329.4  &   356.3  &   235.5  \\
& 1e-09  &   \textbf{566.5}  &   640.0  &   680.7  &   811.0  &   970.8  &   986.0  &  1188.3  &  1218.7  &  1364.4        &  1783.1  &  1511.8  &  1470.8  &   818.1  \\
& 1e-12  &   \textbf{928.4}  &  1030.0  &  1163.7  &  1412.0  &  1575.2  &  1733.2  &  1973.6  &  2075.2  &  2191.1        &  2977.7  &  2780.2  &  2288.4  &  1310.6  \\
\hline

\multicolumn{1}{|c|}{\multirow{3}{*}{4}}
& 1e-06  &   \textbf{212.3}  &   213.7  &   237.1  &   259.0  &   254.7  &   291.8  &   365.3  &   431.4  &   475.1        &   500.5  &   431.3  &   519.0  &   262.8  \\
& 1e-09  &   \textbf{616.1}  &   655.7  &   759.7  &   885.5  &   956.4  &  1107.9  &  1232.4  &  1405.3  &  1533.3        &  1859.4  &  1659.9  &  1489.5  &   805.5  \\
& 1e-12  &   \textbf{996.0}  &  1078.4  &  1250.9  &  1452.1  &  1629.5  &  1786.8  &  2041.6  &  2179.0  &  2427.6        &  3051.5  &  2785.4  &  2383.9  &  1469.3  \\
\hline

\multicolumn{1}{|c|}{\multirow{3}{*}{5}}
& 1e-06  &   \textbf{623.1}  &   654.8  &   663.4  &   671.0  &   683.4  &   697.2  &   761.3  &   813.7  &   931.0        &   832.5  &   650.8  &   816.0  &   668.3  \\
& 1e-09  &  \textbf{2603.0}  &  2654.7  &  2847.6  &  2901.4  &  2936.9  &  3161.7  &  3228.6  &  3306.4  &  3807.2        &  4497.2  &  3185.5  &  2929.7  &  3274.5  \\
& 1e-12  &  \textbf{4622.7}  &  4675.2  &  4905.9  &  4634.2  &  4818.7  &  4944.7  &  5224.0  &  5480.7  &  5972.1        &  7446.7  &  7024.1  &  4808.7  &  5816.6  \\
\hline

\multicolumn{1}{|c|}{\multirow{3}{*}{total}}
& 1e-06   &  \textbf{1339.5}  &  1377.5  &  1434.2  &  1478.1  &  1531.7  &  1587.7  &  1838.3  &  2048.4  &  2310.6       &  2253.7  &  1870.1  &  2170.9  &  1510.4   \\
& 1e-09   &  \textbf{4996.7}  &  5219.0  &  5606.6  &  6314.7  &  6433.3  &  6916.0  &  7964.4  &  8289.7  &  9199.2       & 12395.0  & 10370.0  &  7640.0  &  7662.7   \\
& 1e-12   &  \textbf{8382.0}  &  8750.3  &  9320.1  &  9986.1  & 10421.3  & 11106.5  & 12443.0  & 12982.6  & 14109.7       & 22329.5  & 21378.3  & 12308.9  & 14030.4   \\
\hline

\end{tabular}
\end{tiny}
\end{table}

\begin{table}[th!b]
\setlength{\tabcolsep}{0.25ex}
\caption{The numbers of averaged iterations of the ANGR1, BB1, DY, ABBmin2 and SDC methods on problems in Table \ref{tbspe}}\label{tbANGR1}
\centering
\begin{tiny}
\begin{tabular}{|c|c|c|c|c|c|c|c|c|c|c|c|c|c|c|}
\hline
 \multicolumn{1}{|c|}{\multirow{2}{*}{set}} &\multicolumn{1}{c|}{\multirow{2}{*}{$\epsilon$}}
 &\multicolumn{9}{c|}{$\tau_1$ for ANGR1} &\multirow{2}{*}{BB1} &\multirow{2}{*}{DY} &\multirow{2}{*}{ABBmin2} &\multirow{2}{*}{SDC}\\
\cline{3-11}
  \multicolumn{1}{|c|}{}& \multicolumn{1}{c|}{}  &0.1  &0.2  &0.3  &0.4  &0.5   &0.6   &0.7   &0.8  &0.9 & & & &\\
 \hline
\multicolumn{1}{|c|}{\multirow{3}{*}{1}}																																				
& 1e-06  &   198.7  &   185.8  &   188.1  &   182.5  &   189.2  &   188.2  &   184.5  &   200.1  &   214.1      &   221.0  &   197.1  &   \textbf{177.1}  &   183.2  \\
& 1e-09  &   655.5  &   641.6  &   625.4  &   667.9  &   621.7  &   680.9  &   762.7  &   811.5  &  1012.2      &  2590.0  &  2672.7  &   \textbf{428.8}  &  2029.6  \\
& 1e-12  &   897.3  &   854.7  &   844.5  &   838.4  &   775.2  &   813.1  &   917.4  &   984.7  &  1172.7      &  6032.9  &  6353.9  &   \textbf{560.3}  &  4087.2  \\
\hline

\multicolumn{1}{|c|}{\multirow{3}{*}{2}}
& 1e-06  &   137.9  &   119.0  &   119.2  &   118.7  &   \textbf{115.3}  &   124.5  &   152.4  &   157.5  &   178.9     &   311.2  &   261.5  &   302.5  &   160.6  \\
& 1e-09  &   \textbf{466.5}  &   470.9  &   487.2  &   512.0  &   540.2  &   589.9  &   672.6  &   691.0  &   722.3     &  1665.4  &  1340.0  &  1321.1  &   735.1  \\
& 1e-12  &   788.1  &   \textbf{783.3}  &   802.1  &   835.3  &   908.1  &  1004.5  &  1074.2  &  1110.3  &  1215.2     &  2820.8  &  2434.6  &  2267.5  &  1346.8  \\
\hline

\multicolumn{1}{|c|}{\multirow{3}{*}{3}}
& 1e-06  &   164.5  &   149.6  &   \textbf{146.3}  &   153.0  &   157.8  &   166.8  &   183.3  &   208.9  &   229.1      &   388.4  &   329.4  &   356.3  &   235.5  \\
& 1e-09  &   507.5  &   \textbf{473.0}  &   516.6  &   541.2  &   563.3  &   609.0  &   682.5  &   747.2  &   850.5      &  1783.1  &  1511.8  &  1470.8  &   818.1  \\
& 1e-12  &   818.3  &   \textbf{801.5}  &   865.4  &   894.4  &   965.4  &  1046.0  &  1108.8  &  1200.2  &  1419.8      &  2977.7  &  2780.2  &  2288.4  &  1310.6  \\
\hline

\multicolumn{1}{|c|}{\multirow{3}{*}{4}}
& 1e-06  &   189.7  &   \textbf{172.2}  &   168.8  &   179.6  &   195.2  &   212.7  &   229.4  &   262.9  &   282.1      &   500.5  &   431.3  &   519.0  &   262.8  \\
& 1e-09  &   539.0  &   \textbf{519.2}  &   558.7  &   570.2  &   617.1  &   689.6  &   786.3  &   853.1  &   901.8      &  1859.4  &  1659.9  &  1489.5  &   805.5  \\
& 1e-12  &   \textbf{849.0}  &   851.1  &   894.2  &   934.8  &  1011.8  &  1065.5  &  1197.3  &  1291.8  &  1408.0      &  3051.5  &  2785.4  &  2383.9  &  1469.3  \\
\hline

\multicolumn{1}{|c|}{\multirow{3}{*}{5} }
& 1e-06  &   625.7  &   587.6  &   600.1  &   \textbf{583.1}  &   614.2  &   632.2  &   675.8  &   726.9  &   769.0      &   832.5  &   650.8  &   816.0  &   668.3  \\
& 1e-09  &  2539.9  &  \textbf{2518.1}  &  2612.7  &  2530.4  &  2636.0  &  2581.2  &  2773.6  &  2875.2  &  3026.2      &  4497.2  &  3185.5  &  2929.7  &  3274.5  \\
& 1e-12  &  \textbf{4207.3}  &  4238.2  &  4292.4  &  4256.2  &  4240.7  &  4285.1  &  4397.2  &  4534.8  &  4729.0      &  7446.7  &  7024.1  &  4808.7  &  5816.6  \\
\hline

\multicolumn{1}{|c|}{\multirow{3}{*}{total}}
& 1e-06   &  1316.5  &  \textbf{1214.1}  &  1222.6  &  1217.0  &  1271.8  &  1324.5  &  1425.3  &  1556.4  &  1673.1    &  2253.7  &  1870.1  &  2170.9  &  1510.4   \\
& 1e-09   &  4708.5  &  \textbf{4622.7}  &  4800.7  &  4821.7  &  4978.4  &  5150.6  &  5677.7  &  5977.9  &  6512.9    & 12395.0  & 10370.0  &  7640.0  &  7662.7   \\
& 1e-12   &  7560.1  &  \textbf{7528.8}  &  7698.7  &  7759.1  &  7901.3  &  8214.1  &  8694.9  &  9121.9  &  9944.7    & 22329.5  & 21378.3  & 12308.9  & 14030.4   \\
\hline

\end{tabular}
\end{tiny}
\end{table}

We compared the algorithms by using the performance profiles of Dolan and Mor\'{e} \cite{dolan2002} on iteration metric. In these performance profiles, the vertical axis shows the percentage of the problems the method solves within the factor $\rho$ of the metric used by the most effective method in this comparison.
Fig. \ref{frandp} shows the performance profiles of ANGM, ANGR1 and ANGR2 obtained by setting $\tau_1=0.1$, $\tau_2=1$ and other four compared methods.
Clearly, ANGR2 outperforms all other methods.
In general, we can see ANGM, ANGR1 and ANGR2 are much better than the BB1, DY and SDC methods.

To further analyze the performance of our methods, we present results of them with different values of $\tau_1$ from 0.1 to 0.9 in Tables \ref{tbANGM}, \ref{tbANGR1} and \ref{tbANGR2}. From Table \ref{tbANGM}, we can see that, for the first problem set, ANGM is much faster than the BB1, DY and SDC methods, and competitive with the ABBmin2 method.
As for the other four problem sets, ANGM method with a small value of $\tau_1$ outperforms the other compared methods though its performance seems to become worse as $\tau_1$ increases.
The results shown in Tables \ref{tbANGR1} and \ref{tbANGR2} are slightly different from those in Table \ref{tbANGM}. In particular, ANGR1 and ANGR2 outperform other four compared methods
for most of the problem sets and values of $\tau_1$. For each given $\tau_1$ and tolerance level, ANGR1 and ANGR2 always perform better than other methods in terms of
total number of iterations.

\begin{table}[th!b]
\setlength{\tabcolsep}{0.25ex}
\caption{The numbers of averaged iterations of the ANGR2, BB1, DY, ABBmin2 and SDC methods on problems in Table \ref{tbspe}}\label{tbANGR2}
\centering
\begin{tiny}
\begin{tabular}{|c|c|c|c|c|c|c|c|c|c|c|c|c|c|c|}
\hline
 \multicolumn{1}{|c|}{\multirow{2}{*}{set}} &\multicolumn{1}{c|}{\multirow{2}{*}{$\epsilon$}}
 &\multicolumn{9}{c|}{$\tau_1$ for ANGR2} &\multirow{2}{*}{BB1} &\multirow{2}{*}{DY} &\multirow{2}{*}{ABBmin2} &\multirow{2}{*}{SDC}\\
\cline{3-11}
  \multicolumn{1}{|c|}{}& \multicolumn{1}{c|}{}  &0.1  &0.2  &0.3  &0.4  &0.5   &0.6   &0.7   &0.8  &0.9 & & & &\\
 \hline
\multicolumn{1}{|c|}{\multirow{3}{*}{1}}																																			
& 1e-06  &   196.9  &   191.8  &   184.8  &   186.8  &   183.3  &   189.0  &   \textbf{175.6}  &   182.2  &   189.5    &   221.0  &   197.1  &   177.1  &   183.2  \\
& 1e-09  &   591.0  &   607.8  &   599.4  &   613.3  &   669.5  &   699.6  &   704.0  &   763.6  &  1005.1    &  2590.0  &  2672.7  &   \textbf{428.8}  &  2029.6  \\
& 1e-12  &   870.4  &   811.3  &   763.1  &   790.7  &   800.0  &   881.4  &   844.8  &   971.0  &  1146.5    &  6032.9  &  6353.9  &   \textbf{560.3}  &  4087.2  \\
\hline

\multicolumn{1}{|c|}{\multirow{3}{*}{2}}
& 1e-06  &   129.7  &   117.6  &   115.0  &   \textbf{111.9}  &   114.6  &   116.3  &   140.6  &   146.9  &   164.4   &   311.2  &   261.5  &   302.5  &   160.6  \\
& 1e-09  &   \textbf{455.4}  &   474.8  &   470.8  &   491.4  &   530.6  &   566.0  &   595.9  &   601.8  &   691.5   &  1665.4  &  1340.0  &  1321.1  &   735.1  \\
& 1e-12  &   786.0  &   \textbf{750.5}  &   811.7  &   819.3  &   938.1  &   908.4  &  1000.3  &  1008.5  &  1089.0   &  2820.8  &  2434.6  &  2267.5  &  1346.8  \\
\hline

\multicolumn{1}{|c|}{\multirow{3}{*}{3}}
& 1e-06  &   156.2  &   141.2  &   \textbf{136.3}  &   149.7  &   146.7  &   167.9  &   179.9  &   188.0  &   206.8    &   388.4  &   329.4  &   356.3  &   235.5  \\
& 1e-09  &   495.8  &   \textbf{465.6}  &   490.4  &   529.0  &   530.8  &   606.4  &   667.6  &   718.5  &   726.6    &  1783.1  &  1511.8  &  1470.8  &   818.1  \\
& 1e-12  &   809.0  &   \textbf{775.1}  &   796.4  &   876.6  &   918.8  &   997.0  &  1080.8  &  1130.1  &  1157.8    &  2977.7  &  2780.2  &  2288.4  &  1310.6  \\
\hline

\multicolumn{1}{|c|}{\multirow{3}{*}{4}}
& 1e-06  &   183.2  &   168.0  &   \textbf{164.9}  &   166.9  &   180.9  &   195.7  &   219.8  &   234.2  &   225.3    &   500.5  &   431.3  &   519.0  &   262.8  \\
& 1e-09  &   \textbf{516.2}  &   521.9  &   523.2  &   541.6  &   603.9  &   637.3  &   700.0  &   709.8  &   739.2    &  1859.4  &  1659.9  &  1489.5  &   805.5  \\
& 1e-12  &   842.6  &   \textbf{830.1}  &   845.8  &   899.5  &   959.4  &  1039.2  &  1099.3  &  1133.9  &  1192.2    &  3051.5  &  2785.4  &  2383.9  &  1469.3  \\
\hline

\multicolumn{1}{|c|}{\multirow{3}{*}{5}}
& 1e-06  &   611.7  &   \textbf{580.5}  &   605.4  &   594.8  &   586.2  &   605.1  &   659.3  &   644.1  &   653.1    &   832.5  &   650.8  &   816.0  &   668.3  \\
& 1e-09  &  2472.7  &  \textbf{2394.4}  &  2510.8  &  2504.4  &  2447.2  &  2502.1  &  2551.9  &  2648.6  &  2763.9    &  4497.2  &  3185.5  &  2929.7  &  3274.5  \\
& 1e-12  &  4122.4  &  4108.7  &  \textbf{4103.2}  &  4107.2  &  4161.8  &  4227.9  &  4363.6  &  4324.0  &  4483.7    &  7446.7  &  7024.1  &  4808.7  &  5816.6  \\
\hline

\multicolumn{1}{|c|}{\multirow{3}{*}{total}}
& 1e-06   &  1277.8  &  \textbf{1199.1}  &  1206.3  &  1210.1  &  1211.6  &  1274.0  &  1375.2  &  1395.4  &  1439.1   &  2253.7  &  1870.1  &  2170.9  &  1510.4   \\
& 1e-09   &  4531.1  &  \textbf{4464.5}  &  4594.7  &  4679.7  &  4781.9  &  5011.5  &  5219.4  &  5442.3  &  5926.3   & 12395.0  & 10370.0  &  7640.0  &  7662.7   \\
& 1e-12   &  7430.3  &  \textbf{7275.7}  &  7320.2  &  7493.4  &  7778.1  &  8053.8  &  8388.8  &  8567.6  &  9069.2   & 22329.5  & 21378.3  & 12308.9  & 14030.4   \\
\hline

\end{tabular}
\end{tiny}
\end{table}

Secondly, we compared the methods on solving the non-rand quadratic problem \eqref{pro2} with $n=10,000$. For ANGM, ANGR1 and ANGR2, $\tau_1$ and $\tau_2$ were set to 0.4 and 1, respectively. The parameter pair $(h,s)$ used for the SDC method was set to $(30,2)$. Other settings are the same as above.
Table \ref{tbqp} presents averaged number of iterations over 10 different starting points with entries randomly generated in $[-10,10]$. It can be seen that
ANGM, ANGR1 and ANGR2  are significantly  better than the BB1 and DY methods.
In addition, ANGR1 and ANGR2 often outperform the ABBmin2 and SDC methods while ANGM is very competitive with them.

\begin{table}[th!b]
\setlength{\tabcolsep}{1.2ex}
\caption{The numbers of iterations of the compared methods on problem \eqref{pro2} with $n=10,000$}\label{tbqp}
\centering
\begin{footnotesize}
\begin{tabular}{|c|c|c|c|c|c|c|c|c|}
\hline
 \multicolumn{1}{|c|}{\multirow{1}{*}{$\kappa$}} &\multicolumn{1}{c|}{\multirow{1}{*}{$\epsilon$}}
  &\multirow{1}{*}{BB1} &\multirow{1}{*}{DY} &\multirow{1}{*}{ABBmin2} &\multirow{1}{*}{SDC} &\multicolumn{1}{c|}{ANGM}  &\multicolumn{1}{c|}{ANGR1} &\multicolumn{1}{c|}{ANGR2}\\
 \hline
\multicolumn{1}{|c|}{\multirow{3}{*}{$10^4$}}
& 1e-06  &   626.8  &   527.7  &   513.0  &   597.1  &   539.1		&   \textbf{500.6}		&   512.1\\
& 1e-09  &  1267.0  &   972.1  &   894.5  &  1000.6  &   976.7   &   893.7   &   \textbf{890.2}  \\
& 1e-12  &  1741.9  &  1396.8  &  1277.8  &  1409.2  &  1399.1   &  1298.0   &  \textbf{1257.4} \\
\hline

\multicolumn{1}{|c|}{\multirow{3}{*}{$10^5$}}
& 1e-06  &  1597.8  &  1326.7  &  1266.3  &  1254.3  &  1209.7		&  \textbf{1046.0}		&  1127.9\\
& 1e-09  &  3687.5  &  3168.3  &  2559.8  &  2647.4  &  2605.2   &  2424.3   &  \textbf{2399.8} \\
& 1e-12  &  5564.8  &  4892.4  &  3895.0  &  4156.4  &  4139.0   &  3858.5   &  \textbf{3663.3} \\
\hline

\multicolumn{1}{|c|}{\multirow{3}{*}{$10^6$}}
& 1e-06  &  4060.9  &  2159.4  &  3130.2  &  1986.2  &  2112.9		&  1992.0		&  \textbf{1936.0}\\
& 1e-09  & 10720.4  & 10134.3  &  7560.8  &  7178.5  &  7381.1   &  \textbf{6495.1}   &  6550.1 \\
& 1e-12  & 17805.5  & 18015.6  & 12193.6  & 11646.7  & 11922.9   & 10364.9   & \textbf{10280.2} \\
\hline

\multicolumn{1}{|c|}{\multirow{3}{*}{total}}
& 1e-06   &  6285.5  &  4013.8  &  4909.5  &  3837.6  &  3861.7		&  \textbf{3538.6}		&  3576.0 \\
& 1e-09   & 15674.9  & 14274.7  & 11015.1  & 10826.5  & 10963.0   &  \textbf{9813.1}   &  9840.1  \\
& 1e-12   & 25112.2  & 24304.8  & 17366.4  & 17212.3  & 17461.0   & 15521.4   & \textbf{15200.9}  \\
\hline
\end{tabular}
\end{footnotesize}
\end{table}

Finally, we compared the methods on solving two large-scale real problems Laplace1(a) and Laplace1(b) described in \cite{fletcher2005barzilai}. The two problems require the solution of a system of linear equations derived from a 3D Laplacian on a box, discretized using a standard 7-point finite difference stencil. Each problem has $n=N^3$ variables with $N$ being the number of interior nodes taken in each coordinate direction. The solution is fixed by a Gaussian function centered at $(\alpha,\beta,\gamma)$ and multiplied by $x(x-1)y(y-1)z(z-1)$. The parameter $\sigma$ is used to control the rate of decay of the Gaussian. See \cite{fletcher2005barzilai} for more details on these problems. Here, we set the parameters as follows:
\begin{align*}
&(a)~~ \sigma=20,~ \alpha=\beta=\gamma=0.5;\\
 &(b)~~ \sigma=50,~ \alpha=0.4,~ \beta=0.7,~ \gamma=0.5.
\end{align*}
The null vector was used as the starting point. We again stop the iteration when $\|g_k\|\leq\epsilon\|g_0\|$ with different values of $\epsilon$.


For ANGM, ANGR1 and ANGR2, $\tau_1$ and $\tau_2$ were set to 0.7 and 1.2, respectively. The parameter pair $(h,s)$ used for the SDC method was chosen for the best performance in our test, i.e., $(2,6)$ and $(8,6)$ for Laplace1(a) and Laplace1(b), respectively. Other settings are the same as above. The number of iterations required by the compared methods for solving the two problems are listed in Table \ref{tbnLap}. It can be seen that our methods are significantly better than the BB1, DY and SDC methods and
is often faster than ABBmin2 especially when a tight tolerance is used.

\begin{table}[th!b]
\caption{The numbers of iterations of BB1, DY, ABBmin2, SDC, ANGM, ANGR1 and ANGR2 on solving the 3D Laplacian problem}\label{tbnLap}
\centering
\begin{footnotesize}
\begin{tabular}{|c|c|c|c|c|c|c|c|c|}
\hline
 \multicolumn{1}{|c|}{\multirow{1}{*}{$n$}} &\multicolumn{1}{c|}{\multirow{1}{*}{$\epsilon$}}
  &\multirow{1}{*}{BB1} &\multirow{1}{*}{DY} &\multirow{1}{*}{ABBmin2} &\multirow{1}{*}{SDC} &\multicolumn{1}{c|}{ANGM}  &\multicolumn{1}{c|}{ANGR1} &\multicolumn{1}{c|}{ANGR2}\\
 \hline
 \multicolumn{9}{|c|}{Laplace1(a)}\\
  \hline
\multicolumn{1}{|c|}{\multirow{3}{*}{$60^{3}$}}
& 1e-06  &   259  &   249  &   \textbf{192}  &   213  &   245  &   195  &   233  \\
& 1e-09  &   441  &   373  &   329  &   393  &   313  &   322  &   \textbf{308}  \\
& 1e-12  &   680  &   546  &   401  &   529  &   367  &   373  &   \textbf{364}  \\
\hline

\multicolumn{1}{|c|}{\multirow{3}{*}{$80^{3}$}}
& 1e-06  &   359  &   383  &   289  &   297  &   291  &   332  &   \textbf{288}  \\
& 1e-09  &   591  &   570  &   430  &   553  &   408  &   446  &   \textbf{396}  \\
& 1e-12  &   882  &   789  &   608  &   705  &   620  &   \textbf{516}  &   591  \\
\hline

\multicolumn{1}{|c|}{\multirow{3}{*}{$100^{3}$}}
& 1e-06  &   950  &   427  &   351  &   513  &   450  &   \textbf{303}  &   416  \\
& 1e-09  &  1088  &   651  &   485  &   609  &   584  &   519  &   \textbf{503}  \\
& 1e-12  &  1241  &   918  &   687  &   825  &   694  &   604  &   \textbf{597}  \\
\hline

\multicolumn{1}{|c|}{\multirow{3}{*}{total}}
& 1e-06   &  1568  &  1059  &   832  &  1023  &   986  &   \textbf{830}  &   937 \\
& 1e-09   &  2120  &  1594  &  1244  &  1555  &  1305  &  1287  &  \textbf{1207} \\
& 1e-12   &  2803  &  2253  &  1696  &  2059  &  1681  &  \textbf{1493}  &  1552 \\
\hline

\multicolumn{9}{|c|}{Laplace1(b)}\\
 \hline

 \multicolumn{1}{|c|}{\multirow{3}{*}{$60^{3}$}}
& 1e-06  &   246  &   236  &   217  &   \textbf{213}  &   242  &   217  &   214  \\
& 1e-09  &   473  &   399  &   365  &   437  &   \textbf{333}  &   338  &   409  \\
& 1e-12  &   651  &   532  &   502  &   555  &   \textbf{451}  &   478  &   573  \\
\hline

\multicolumn{1}{|c|}{\multirow{3}{*}{$80^{3}$}}
& 1e-06  &   288  &   454  &   294  &   309  &   296  &   \textbf{290}  &   324  \\
& 1e-09  &   607  &   567  &   \textbf{433}  &   485  &   517  &   499  &   495  \\
& 1e-12  &   739  &   794  &   634  &   766  &   686  &   \textbf{590}  &   645  \\
\hline

\multicolumn{1}{|c|}{\multirow{3}{*}{$100^{3}$}}
& 1e-06  &   544  &   371  &   369  &   379  &   381  &   406  &   \textbf{358}  \\
& 1e-09  &   646  &   700  &   585  &   653  &   638  &   \textbf{558}  &   648  \\
& 1e-12  &   937  &  1038  &   880  &   965  &   854  &   \textbf{785}  &   810  \\
\hline

\multicolumn{1}{|c|}{\multirow{3}{*}{total}}
& 1e-06   &  1078  &  1061  &   \textbf{880}  &   901  &   919  &   913  &   896 \\
& 1e-09   &  1726  &  1666  &  \textbf{1383}  &  1575  &  1488  &  1395  &  1552 \\
& 1e-12   &  2327  &  2364  &  2016  &  2286  &  1991  &  \textbf{1853}  &  2028 \\
\hline
\end{tabular}
\end{footnotesize}
\end{table}

\subsection{Unconstrained problems}
Now we present comparisons of ANGR1 and ANGR2 with SPG method\footnote{codes available at \url{https://www.ime.usp.br/~egbirgin/tango/codes.php}} in \cite{birgin2000nonmonotone,birgin2014spectral}, and the BB1 method using Dai-Zhang nonmonotone line search \cite{dai2001adaptive} (BB1-DZ)
on solving general unconstrained problems.

For our methods, the parameter values are set as the following:
\begin{equation*}
  \alpha_{\min}=10^{-30},~\alpha_{\max}=10^{30},~M=8,~\sigma=10^{-4},~\alpha_0=1/\|g_0\|_\infty.
\end{equation*}
In addition, the parameter $\tau_1$ and $\tau_2$ for ANGR1 and ANGR2 are set to 0.8 and 1.2, respectively. Default parameters were used for SPG. Each method was stopped if the number of iteration exceeds 200,000 or $\|g_k\|_\infty\leq10^{-6}$.

Our test problems were taken from \cite{andrei2008unconstrained}.
 We have tested 59 problems listed in Table \ref{tbunpro} with $n=1,000$
 and the performance profiles are shown in  Fig. \ref{Andrei},
 which  shows that ANGR1 and ANGR2 outperform SPG and BB1-DZ in terms of the iteration number,
 and ANGR2 is faster than ANGR1.
Moreover, BB1-DZ is slightly better than SPG. Detail numerical results are also presented in Table \ref{tbunc}.
Since the only difference between BB1-DZ with ANGR1 and ANGR2 lies in the choice of stepsizes,
these numerical results show our adaptive choices of stepsizes in ANGR1 and ANGR2 are very effective
 and can indeed greatly accelerate the convergence of BB-type methods.

\begin{table}[ht!b]
\setlength{\tabcolsep}{1.5ex}
\caption{Test problems from \cite{andrei2008unconstrained}}\label{tbunpro}
\centering
\begin{tiny}
\begin{tabular}{|c|c|c|c|}
\hline
Problem & name &Problem & name\\
\hline
  1      &Extended Freudenstein \& Roth function 		&31      &                        NONDIA\\
  2      &        Extended Trigonometric            &32      &                       DQDRTIC\\
  3      &        Extended White \& Holst           &33      &   Partial Perturbed Quadratic\\
  4      &                Extended Beale              &34      &           Broyden Tridiagonal\\
  5      &              Extended Penalty              &35      &    Almost Perturbed Quadratic\\
  6      &           Perturbed Quadratic              &36      &Perturbed Tridiagonal Quadratic\\
  7      &                      Raydan 1              &37      &                   Staircase 1\\
  8      &                      Raydan 2              &38      &                   Staircase 2\\
  9      &                    Diagonal 1              &39      &                       LIARWHD\\
  10      &                    Diagonal 2             &40      &                         POWER\\
  11      &                    Diagonal 3             &41      &                       ENGVAL1\\
  12      &                         Hager             &42      &                       EDENSCH\\
  13      &     Generalized Tridiagonal 1             &43      &                         BDEXP\\
  14      &        Extended Tridiagonal 1             &44      &                      GENHUMPS\\
  15      &                  Extended TET             &45      &                      NONSCOMP\\
  16      &     Generalized Tridiagonal 2             &46      &                        VARDIM\\
  17      &                    Diagonal 4             &47      &                        QUARTC\\
  18      &                    Diagonal 5             &48      &             Extended DENSCHNB\\
  19      &           Extended Himmelblau             &49      &             Extended DENSCHNF\\
  20      &                 Extended PSC1             &50      &                       LIARWHD\\
  21      &              Generalized PSC1             &51      &                       BIGGSB1\\
  22      &               Extended Powell             &52      &           Generalized Quartic\\
  23      &                Extended Cliff             &53      &                    Diagonal 7\\
  24      &  Perturbed quadratic diagonal             &54      &                    Diagonal 8\\
  25      &                 Quadratic QF1             &55      &              Full Hessian FH3\\
  26      &Extended quadratic exponential EP1        &56      &                        SINCOS\\
  27      &        Extended Tridiagonal 2             &57      &                    Diagonal 9\\
  28      &                       BDQRTIC             &58      &                      HIMMELBG\\
  29      &                        TRIDIA             &59      &                       HIMMELH\\
  30      &                       ARWHEAD  						&					&\\

\hline
\end{tabular}
\end{tiny}
\end{table}

\begin{figure}[th!b]
\centering
    \includegraphics[width=0.55\textwidth,height=0.4\textwidth]{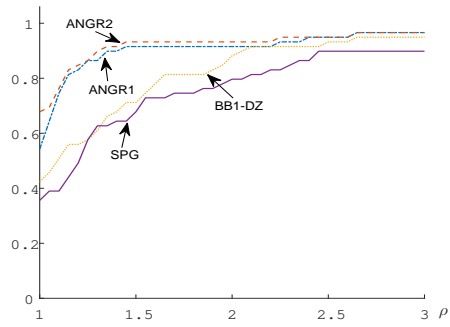}
  \caption{Performance profiles of compared methods on 59 unconstrained problems in Table \ref{tbunpro}, iteration metric}\label{Andrei}
\end{figure}

\begin{table}[th!b]
\setlength{\tabcolsep}{0.15ex}
\caption{Results of compared methods on unconstrained problems in Table \ref{tbunpro}}\label{tbunc}
\centering
\begin{tiny}
\begin{tabular}{|c|c|c|c|c|c|c|c|c|c|c|c|c|c|c|c|c|}
\hline
 \multicolumn{1}{|c|}{\multirow{2}{*}{Problem}} &\multicolumn{3}{c|}{\multirow{1}{*}{SPG}}
 &\multicolumn{3}{c|}{\multirow{1}{*}{BB1-DZ}}
 &\multicolumn{3}{c|}{\multirow{1}{*}{ANGR1}}
&\multicolumn{3}{c|}{\multirow{1}{*}{ANGR2}}   \\
\cline{2-13}
  \multicolumn{1}{|c|}{} & iter  &$f_k$  &$\|g_k\|_\infty$   & iter  &$f_k$  &$\|g_k\|_\infty$   & iter  &$f_k$  &\multicolumn{1}{c|}{$\|g_k\|_\infty$}
  & iter  &$f_k$  &\multicolumn{1}{c|}{$\|g_k\|_\infty$}\\
\hline

  1 &    87  &2.45e+04  &8.95e-07   &    41  &2.45e+04  &4.06e-08   &    26  &2.45e+04  &1.46e-07   &    25  &2.45e+04  &1.46e-07  \\
 2 &    45  &5.72e-13  &3.83e-07   &    87  &5.26e-07  &3.97e-07   &   100  &4.89e-07  &9.32e-07   &   100  &4.73e-07  &2.20e-07  \\
 3 &   110  &5.80e-14  &3.05e-08   &   120  &6.30e-20  &4.95e-11   &    91  &9.95e-12  &2.93e-07   &    88  &1.05e-11  &2.90e-07  \\
 4 &    46  &3.92e-10  &6.57e-07   &    65  &8.28e-18  &3.74e-10   &    31  &9.52e-15  &4.15e-08   &    31  &1.94e-12  &6.12e-08  \\
 5 &    41  &8.83e+02  &9.02e-07   &   107  &8.83e+02  &1.24e-08   &   107  &8.83e+02  &1.24e-08   &   107  &8.83e+02  &1.24e-08  \\
 6 &   597  &2.39e-13  &9.83e-07   &   457  &2.16e-13  &9.24e-07   &   300  &7.98e-16  &5.66e-08   &   287  &1.69e-13  &8.03e-07  \\
 7 &   465  &5.01e+04  &9.64e-07   &   339  &5.01e+04  &9.04e-07   &   330  &5.01e+04  &5.56e-08   &   301  &5.01e+04  &8.39e-07  \\
 8 &     1  &1.00e+03  &0.00e+00   &     1  &1.00e+03  &0.00e+00   &     1  &1.00e+03  &0.00e+00   &     1  &1.00e+03  &0.00e+00  \\
 9 &   415  &-2.71e+06  &8.50e-07   &   361  &-2.71e+06  &5.43e-07   &   294  &-2.71e+06  &9.05e-07   &   276  &-2.71e+06  &7.04e-07  \\
 10 &   168  &3.13e+01  &6.05e-07   &   173  &3.13e+01  &5.68e-07   &   135  &3.13e+01  &5.93e-07   &   131  &3.13e+01  &8.61e-07  \\
 11 &   668  &-4.96e+05  &8.82e-07   &   404  &-4.96e+05  &8.63e-07   &   336  &-4.96e+05  &8.85e-07   &   286  &-4.96e+05  &9.93e-07  \\
 12 &    63  &-4.47e+04  &2.91e-07   &    58  &-4.47e+04  &5.70e-07   &    52  &-4.47e+04  &5.14e-07   &    52  &-4.47e+04  &5.59e-07  \\
 13 &    29  &9.97e+02  &6.53e-07   &    27  &9.97e+02  &9.67e-07   &    26  &9.97e+02  &4.21e-07   &    25  &9.97e+02  &9.79e-07  \\
 14 &    30  &2.13e-07  &3.75e-07   &    18  &2.68e-07  &4.46e-07   &    20  &5.20e-07  &7.33e-07   &    20  &5.20e-07  &7.33e-07  \\
 15 &    10  &1.28e+03  &4.34e-10   &     5  &1.28e+03  &1.33e-08   &     5  &1.28e+03  &1.33e-08   &     5  &1.28e+03  &1.33e-08  \\
 16 &    69  &2.38e+00  &8.58e-07   &    58  &9.58e-01  &7.14e-07   &    56  &9.58e-01  &8.40e-07   &    57  &9.58e-01  &6.13e-07  \\
 17 &     3  &0.00e+00  &0.00e+00   &     3  &0.00e+00  &0.00e+00   &     3  &0.00e+00  &0.00e+00   &     3  &0.00e+00  &0.00e+00  \\
 18 &     4  &6.93e+02  &4.81e-08   &     1  &6.93e+02  &0.00e+00   &     1  &6.93e+02  &0.00e+00   &     1  &6.93e+02  &0.00e+00  \\
 19 &    15  &4.21e-19  &1.92e-10   &    15  &4.21e-19  &1.92e-10   &    15  &4.21e-19  &1.92e-10   &    15  &4.21e-19  &1.92e-10  \\
 20 &    23  &9.99e+02  &8.27e-07   &    22  &9.99e+02  &8.65e-07   &    22  &9.99e+02  &8.65e-07   &    22  &9.99e+02  &8.65e-07  \\
 21 &    15  &3.87e+02  &5.87e-07   &    13  &3.87e+02  &3.78e-07   &    13  &3.87e+02  &3.78e-07   &    13  &3.87e+02  &3.78e-07  \\
 22 &   246  &3.19e-07  &6.80e-07   &   266  &3.67e-07  &5.97e-07   &   164  &7.08e-07  &9.62e-07   &   205  &5.55e-07  &9.82e-07  \\
 23 &   154  &9.99e+01  &7.79e-07   &   126  &9.99e+01  &1.57e-07   &   541  &9.99e+01  &2.61e-07   &   539  &9.99e+01  &6.10e-08  \\
 24 &   373  &2.41e-11  &8.95e-07   &   246  &1.41e-11  &6.83e-07   &   249  &2.05e-12  &2.98e-07   &   314  &4.05e-12  &4.14e-07  \\
 25 &   612  &-5.00e-04  &9.45e-07   &   575  &-5.00e-04  &9.95e-07   &   319  &-5.00e-04  &8.92e-08   &   282  &-5.00e-04  &7.45e-07  \\
 26 &     4  &7.93e+03  &4.19e-08   &     4  &7.93e+03  &3.71e-09   &     4  &7.93e+03  &3.71e-09   &     4  &7.93e+03  &3.71e-09  \\
 27 &    34  &3.89e+02  &7.50e-07   &    39  &3.89e+02  &9.74e-07   &    36  &3.89e+02  &8.49e-07   &    36  &3.89e+02  &5.15e-07  \\
 28 &    77  &3.98e+03  &7.64e-07   &    98  &3.98e+03  &8.91e-07   &    69  &3.98e+03  &1.49e-07   &    64  &3.98e+03  &9.57e-07  \\
 29 &  3910  &2.47e-13  &6.97e-07   &  2403  &9.93e-15  &6.71e-07   &   816  &9.62e-16  &4.67e-07   &   611  &6.71e-14  &5.86e-07  \\
 30 &     4  &0.00e+00  &1.50e-09   &     4  &0.00e+00  &1.50e-09   &     4  &0.00e+00  &1.50e-09   &     4  &0.00e+00  &1.50e-09  \\
 31 &    15  &1.35e-14  &9.86e-09   &    15  &1.35e-14  &9.86e-09   &    16  &1.10e-12  &8.20e-08   &    16  &1.10e-12  &8.20e-08  \\
 32 &    41  &4.20e-16  &2.86e-07   &    26  &1.80e-13  &8.47e-07   &    17  &2.99e-16  &2.14e-07   &    17  &2.99e-16  &2.14e-07  \\
 33 &   312  &2.51e-14  &5.96e-07   &   229  &1.42e-13  &9.63e-07   &   257  &6.00e-15  &2.16e-07   &   294  &7.03e-14  &8.24e-07  \\
 34 &    47  &5.31e-15  &3.20e-07   &    51  &7.94e-14  &8.28e-07   &    41  &7.67e-14  &7.35e-07   &    47  &2.62e-14  &7.66e-07  \\
 35 &   621  &2.36e-13  &9.76e-07   &   363  &2.47e-13  &9.97e-07   &   358  &1.53e-13  &7.77e-07   &   272  &4.53e-15  &6.22e-07  \\
 36 &   595  &2.49e-13  &9.97e-07   &   606  &2.48e-13  &9.95e-07   &   368  &3.46e-13  &1.00e-06   &   306  &1.82e-13  &8.61e-07  \\
 37 & 15936  &1.81e-12  &7.80e-07   & 10239  &1.54e-12  &7.68e-07   &  7234  &1.39e-13  &1.89e-07   &  6543  &1.07e-12  &9.13e-07  \\
 38 & 28528  &1.97e-14  &2.43e-07   & 25640  &2.74e-12  &9.97e-07   & 60283  &2.86e-12  &9.93e-07   & 27783  &2.98e-12  &9.74e-07  \\
 39 &    61  &2.64e-14  &2.06e-08   &    59  &2.62e-13  &4.93e-07   &    47  &9.15e-14  &6.15e-08   &    47  &1.30e-12  &2.36e-07  \\
 40 &  5435  &2.84e-13  &9.98e-07   &  5472  &2.94e-13  &9.98e-07   &   327  &1.89e-13  &6.99e-07   &   366  &5.63e-13  &9.98e-07  \\
 41 &    33  &1.11e+03  &8.58e-07   &    31  &1.11e+03  &8.09e-07   &    29  &1.11e+03  &1.70e-07   &    29  &1.11e+03  &1.70e-07  \\
 42 &    32  &6.00e+03  &7.33e-07   &    32  &6.00e+03  &7.33e-07   &    31  &6.00e+03  &4.46e-07   &    31  &6.00e+03  &9.35e-07  \\
 43 &    15  &6.52e-05  &9.93e-07   &    15  &6.52e-05  &9.93e-07   &    15  &6.52e-05  &9.93e-07   &    15  &6.52e-05  &9.93e-07  \\
 44 &   747  &1.39e-17  &1.50e-09   &     3  &4.49e-23  &3.00e-12   &     3  &4.49e-23  &3.00e-12   &     3  &4.49e-23  &3.00e-12  \\
 45 &    48  &9.98e-12  &8.75e-07   &    89  &1.97e-14  &2.57e-07   &    59  &2.30e-13  &5.85e-07   &    63  &2.18e-13  &7.89e-07  \\
 46 &     1  &4.62e-30  &4.44e-16   &    30  &2.17e-27  &5.77e-15   &    30  &2.17e-27  &5.77e-15   &    30  &2.17e-27  &5.77e-15  \\
 47 &     1  &0.00e+00  &0.00e+00   &     1  &0.00e+00  &0.00e+00   &     1  &0.00e+00  &0.00e+00   &     1  &0.00e+00  &0.00e+00  \\
 48 &     8  &1.37e-11  &4.68e-07   &     8  &1.37e-11  &4.68e-07   &     8  &1.37e-11  &4.68e-07   &     8  &1.37e-11  &4.68e-07  \\
 49 &    12  &1.78e-19  &4.76e-10   &    17  &6.46e-21  &5.91e-11   &    17  &6.46e-21  &5.91e-11   &    17  &6.46e-21  &5.91e-11  \\
 50 &    61  &2.64e-14  &2.06e-08   &    59  &2.62e-13  &4.93e-07   &    47  &9.15e-14  &6.15e-08   &    47  &1.30e-12  &2.36e-07  \\
 51 & 30459  &1.24e-05  &9.95e-07   & 14877  &3.73e-07  &6.46e-07   &  7558  &6.51e-07  &8.62e-07   &  8969  &6.08e-12  &6.45e-07  \\
 52 &     9  &4.98e-15  &1.99e-07   &     9  &4.98e-15  &1.99e-07   &     9  &4.98e-15  &1.99e-07   &     9  &4.98e-15  &1.99e-07  \\
 53 &     7  &-8.17e+02  &1.97e-09   &     7  &-8.17e+02  &1.97e-09   &     7  &-8.17e+02  &1.97e-09   &     7  &-8.17e+02  &1.97e-09  \\
 54 &     6  &-4.80e+02  &5.85e-08   &     6  &-4.80e+02  &4.36e-10   &     6  &-4.80e+02  &4.36e-10   &     6  &-4.80e+02  &4.36e-10  \\
 55 &     3  &-2.50e-01  &4.56e-10   &     3  &-2.50e-01  &4.56e-10   &     3  &-2.50e-01  &4.56e-10   &     3  &-2.50e-01  &4.56e-10  \\
 56 &    15  &3.87e+02  &5.87e-07   &    13  &3.87e+02  &3.78e-07   &    13  &3.87e+02  &3.78e-07   &    13  &3.87e+02  &3.78e-07  \\
 57 &   553  &-2.70e+06  &1.91e-07   &   762  &-2.70e+06  &6.06e-07   &   305  &-2.70e+06  &9.96e-07   &   346  &-2.70e+06  &5.60e-07  \\
 58 &    16  &4.79e-04  &8.77e-07   &    17  &3.78e-04  &6.91e-07   &    17  &3.78e-04  &6.91e-07   &    17  &3.78e-04  &6.91e-07  \\
 59 &    12  &-5.00e+02  &1.09e-07   &    12  &-5.00e+02  &7.41e-11   &    12  &-5.00e+02  &7.41e-11   &    12  &-5.00e+02  &7.41e-11  \\

\hline
\end{tabular}
\end{tiny}
\end{table}

\subsection{Bound constrained problems}
We further compare ANGR1 and ANGR2, with SPG \cite{birgin2000nonmonotone,birgin2014spectral} and the BB1-DZ method \cite{dai2001adaptive}
combined with gradient projection techniques on solving bound constrained problems from the CUTEst collection  \cite{gould2015cutest} with dimension more than $50$.
We deleted $3$ problems from this list since none of these comparison algorithms can solve them. Hence, in total there are $47$ problems left in our test.
The iteration was stopped if the number of iteration exceeds 200,000 or  $\|P_{\Omega}(x_k-g_k)-x_k\|_{\infty}\leq10^{-6}$.
The parameters $\tau_1$ and $\tau_2$ for ANGR1 and ANGR2 are set to 0.4 and 1.5, respectively.
Other settings are the same as before.
Fig. \ref{cutest} shows the performance profiles of all the compared methods on iteration metric.
Similar as the unconstrained case, from  Fig. \ref{cutest}, we again see that both ANGR1 and ANGR2 perform significantly better than SPG and BB1-DZ. Hence, our new gradient methods also have potential great benefits
for solving constrained optimization.

\begin{figure}[thp!b]
  \centering
  \includegraphics[width=0.55\textwidth,height=0.4\textwidth]{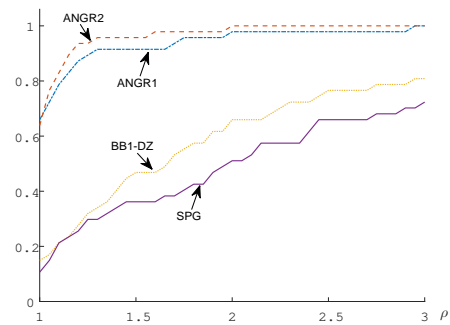}\\
  \caption{Performance profiles of compared methods on solving 47 bound constrained problems from CUTEst, iteration metric}\label{cutest}
\end{figure}

\section{Conclusions}\label{secclu}
We have developed techniques to accelerate the Barzilai-Borwein (BB) method motivated from finite termination for minimizing two-dimensional strongly convex quadratic
functions. More particularly, by exploiting certain orthogonal properties of the gradients, we derive a new monotone stepsize that can be combined with BB stepsizes to
significantly improve their performance for minimizing general strongly convex quadratic functions.
By adaptively using this new stepsize and the two BB stepsizes, we develop a new gradient method called ANGM and its two variants ANGR1 and ANGR2,
which are further extended for solving unconstrained and bound constrained optimization.
Our extensive numerical experiments show that all the new developed methods are significantly better
than the BB method
and are faster than some very successful gradient methods developed in the recent literature for solving quadratic, general smooth unconstrained
and bound constrained  optimization.


\begin{acknowledgements}
This work was supported by the National Natural Science Foundation of China (Grant Nos. 11631013, 11701137, 11671116), the National 973 Program of China (Grant No. 2015CB856002),  China Scholarship Council (No. 201806705007), and USA National Science Foundation (1522654, 1819161).
\end{acknowledgements}


\end{document}